\numberwithin{equation}{section}
\theoremstyle{plain}
\newtheorem{theorem}{Theorem}[section]
\newtheorem{lemma}{Lemma}[section]
\theoremstyle{definition}
\newtheorem{definition}{Definition}[section]
\theoremstyle{remark}
\newtheorem{remark}{Remark}[section]
\newcommand{\stkout}[1]{\ifmmode\text{\sout{\ensuremath{#1}}}\else\sout{#1}\fi}
\definecolor{mred}{rgb}{.7,.0,.0}
\definecolor{dred}{rgb}{.6,.0,.0}
\definecolor{mblue}{rgb}{.0,.0,.8}
\definecolor{dblue}{rgb}{.0,.0,.5}
\definecolor{mgreen}{rgb}{.0,.6,.0}
\definecolor{dgreen}{rgb}{.0,.4,.0}
\definecolor{dmagenta}{rgb}{.4,.1,.5}
\crefname{section}{Section}{Sections}
\crefname{subsection}{subsection}{subsections}
\crefname{notation}{Notation}{Notations}
\crefname{hypothesis}{Hypothesis}{Conditions}
\crefname{assumption}{Assumption}{Assumptions}
\crefname{lemma}{Lemma}{Lemmas}
\crefname{claim}{Claim}{Claims}
\Crefname{figure}{Figure}{Figures}
\newcommand{\grad}{\nabla}
\newcommand{\Cc}{C}
\newcommand{\lamstr}{\lambda^{\mspace{-2mu}*}}
\newcommand{\Psis}{\Psi_{\mspace{-2mu*}}}
\newcommand{\psis}{\psi^{}_*}
\newcommand{\process}[1]{{\{#1_t\}_{t\ge0}}}
\newcommand{\Ind}{\mathds{1}}           
\newcommand{\transp}{^{\mathsf{T}}}     
\newcommand{\Exp}{{\mathbb{E}}}         
\newcommand{\Prob}{{\mathbb{P}}}        
\newcommand{\RR}{{\mathbb R}}           
\newcommand{\Rd}{{{\mathbb R}^d}}
\newcommand{\NN}{{\mathbb N}}           
\newcommand{\D}{\mathrm{d}}             
\newcommand{\E}{\mathrm{e}}             
\newcommand{\df}{\coloneqq}             
\newcommand{\sB}{{\mathscr{B}}}         
\newcommand{\Act}{\mathbb{U}}           
\newcommand{\Uadm}{\mathfrak{U}}        
\newcommand{\Usm}{\mathfrak{U}_{\mathsf{sm}}}  
\newcommand{\Usms}{\mathfrak{U}^*_{\mathsf{sm}}}
\newcommand{\cPsv}{{\cP^{}_{\mspace{-3mu}*,v}}}
\newcommand{\cPov}{{\cP^{}_{\mspace{-3mu}\circ,v}}}
\newcommand{\Sob}{\mathscr{W}}          
\newcommand{\Sobl}{\mathscr{W}_{\mathrm{loc}}}  
\newcommand{\Ag}{{\mathcal{A}}}
\newcommand{\fB}{\mathfrak{B}}          
\newcommand{\sF}{\mathfrak{F}}          
\newcommand{\cG}{{\mathcal{G}}}         
\newcommand{\cH}{{\mathcal{H}}}         
\newcommand{\Lp}{{L}}            
\newcommand{\Lg}{\mathcal{L}}           
\newcommand{\cM}{\mathcal{M}}
\newcommand{\sM}{\mathscr{M}}           
\newcommand{\cP}{{\mathcal{P}}}         
\newcommand{\Lyap}{\mathscr{V}}         
\newcommand{\abs}[1]{\lvert#1\rvert}
\newcommand{\norm}[1]{\lVert#1\rVert}
\newcommand{\babs}[1]{\bigl\lvert#1\bigr\rvert}
\newcommand{\bnorm}[1]{\bigl\lVert#1\bigr\rVert}
\DeclareMathOperator*{\dist}{dist}
\DeclareMathOperator{\trace}{Tr}
\newcommand{\ttl}{\Large
A variational characterization of  the optimal\\[5pt]
exit rate for controlled diffusions}
\begin{document}
\title[On the optimal exit rate for controlled diffusions]{\ttl}

\author[Ari Arapostathis]{Ari Arapostathis$^\dag$}
\address{$^\dag$Department of Electrical and Computer Engineering,
The University of Texas at Austin, 2501 Speedway,
EER~7.824,
Austin, TX~78712}
\email{ari@utexas.edu} 

\author[Vivek S. Borkar]{Vivek S. Borkar$^\ddag$}
\address{$^\ddag$Department of Electrical Engineering,
Indian Institute of Technology, Powai, Mumbai 400076, India}
\email{borkar@ee.iitb.ac.in} 

\begin{abstract}
The main result in this paper is a variational
formula for the exit rate from a bounded domain for a diffusion process
in terms of the stationary law of the diffusion constrained to remain in this
domain forever.
Related results on the geometric ergodicity of the controlled
$Q$-process are also presented.
\end{abstract}

\subjclass[2000]{60J25, 49G05, 60J60}

\keywords{killed diffusions; exit rate, principal eigenvalue;
$Q$-process; quasi-stationarity}

\maketitle

\section{Introduction}

A variational formulation for the principal eigenvalue of certain elliptic
operators associated with diffusion processes was given in the celebrated
article of Donsker and Varadhan \cite{DonVar-75}.
A discrete counterpart for Markov chains appears in \cite{DZ}. Subsequently, the
present authors and their collaborators extended it to controlled Markov chains
\cite{VenBor-17} and diffusions \cites{ABis-19,ABBK-19,ABK};
see also \cite{Survey} for an overview.
These articles also point out that these results are abstract counterparts of the
Collatz--Wielandt formula for the Perron--Frobenius eigenvalue of irreducible
non-negative matrices \cites{Collatz,Wielandt}.

The present article makes a connection between the study
of principal eigenvalues and a different strand of research.
This concerns quasi-stationarity which aims to study the `near-stationary' behavior
of a Markov process when it spends a long time in a sub-domain of its state space
before exiting from it \cites{Collet,Meleard-12}.
An important notion in this context is the law of the process conditioned on never
exiting a given domain.
This is the so called \textit{$Q$-process}, which in case of a diffusion, amounts
to adding a drift that explodes at the boundary of the domain so as to
confine the process to the domain in a precise manner.
The $Q$-process has been studied for various classes of Markov processes, a landmark
paper for the special case of diffusions being \cite{Pinsky-85}.
Some important recent contributions to the general case are
\cites{CV,Champagnat-17,Knobloch-10}.
As a part of our development, we re-prove some of their results for diffusions
using stochastic calculus based arguments.
Our main result is a new variational formula for the optimal eigenvalue of the
aforementioned controlled eigenvalue problem in terms of the associated $Q$-process.

The principal eigenvalue is also the exit rate from the domain, that is, the asymptotic
exponential decay rate of the tail of the distribution of the first exit time $\uptau$
from the domain, given by $-\lim_{t\uparrow\infty}\frac{1}{t}\log P(\uptau > t)$.
Exit time statistics and related themes have been of independent interest, particularly
in the small noise limit, and have been extensively studied, e.g., in
\cites{Befekadu-15,Fleming-77,FlemJam-92,Fleming-85,Fleming-81,Gong-88}.
Our focus, however, is different from that of these works.

The article is organized as follows. The next subsection introduces the key notation.
\Cref{S2} introduces the controlled eigenvalue problem associated with a
diffusion in a bounded domain.
\Cref{S3} introduces the $Q$-process whose law agrees
with the law of the original diffusion
conditioned on never exiting the prescribed domain.
It also develops some key properties of this process such as ergodicity and the
associated invariant probability measure.
\Cref{S4} states and proves our main  result, a new representation theorem for the
optimal eigenvalue in terms of the $Q$-process.

\subsection{Notation}\label{Snot}
We denote by $\uptau(A)$ the \emph{first exit time} of the process
$\process{X}$ from the set $A\subset\RR^{d}$, defined by
\begin{equation}\label{E-uptau}
\uptau(A) \,\df\, \inf\,\{t>0\,\colon X_{t}\not\in A\}\,.
\end{equation}

The complement, closure, and boundary
of a set $A\subset\Rd$ are denoted
by $A^{c}$, $\Bar{A}$ and $\partial A$, respectively, and  $\Ind_A$
denotes its indicator function.
Given $a,b\in\RR$, the minimum (maximum) is denoted by $a\wedge b$
($a\vee b$), respectively.
The inner product of two vectors $x$ and $y$ in $\Rd$ is denoted
as $\langle x,y\rangle$, $\abs{\,\cdot\,}$ denotes
the Euclidean norm, $x\transp$ stands for
the transpose of $x$, and
$\trace S$ denotes the trace of a square matrix $S$.

The term \emph{domain} in $\RR^{d}$
refers to a nonempty, connected open subset of the Euclidean space $\RR^{d}$.
For a domain $D\subset\RR^{d}$,
the space $\Cc^{k}(D)$ ($\Cc^{k}_{b}(D)$), $k\ge 0$,
refers to the class of all real-valued functions on $D$ whose partial
derivatives up to order $k$ exist and are continuous (and bounded),
$\Cc_{\mathrm{c}}^k(D)$ denotes its subset
 consisting of functions that have compact support,
and $\Cc_0^k(D)$ the closure of $\Cc_{\mathrm{c}}^k(D)$.
Also $C^k_+(D)$ denotes the subspace of $C^k(D)$ consisting of those
functions that are positive on $D$.
The space $\Lp^{p}(D)$, $p\in[1,\infty)$, stands for the Banach space
of (equivalence classes of) measurable functions $f$ satisfying
$\int_{D} \abs{f(x)}^{p}\,\D{x}<\infty$, and $\Lp^{\infty}(D)$ is the
Banach space of functions that are essentially bounded in $D$.
The standard Sobolev space of functions on $D$ whose generalized
derivatives up to order $k$ are in $\Lp^{p}(D)$, equipped with its natural
norm, is denoted by $\Sob^{k,p}(D)$, $k\ge0$, $p\ge1$.
In general, if $\mathcal{X}$ is a space of real-valued functions on $D$,
the space
$\mathcal{X}_{\mathrm{loc}}$ consists of all functions $f$ such that
$f\varphi\in\mathcal{X}$ for every $\varphi\in\Cc_{\mathrm{c}}(\mathcal{X})$.
This defines $\Sobl^{k,p}(D)$.

We say that a continuous function $f\colon D\to\RR$ is \emph{inf-compact}
if the sublevel set $\{x\in D\colon f(x)\le C\}$ is compact (or empty)
for any $C\in\RR$.

\section{The controlled exit rate problem}\label{S2}

We begin by recalling the controlled eigenvalue problem from \cite{BB}.
We consider a controlled diffusion given by the $d$-dimensional It\^o stochastic
differential equation (SDE)
\begin{equation}\label{sde}
X_t \,=\, X_0 + \int_0^t m(X_s, U_s)\,\D{s} + \int_0^t\upsigma(X_s)\,\D{W}_s\,,
\quad t \ge 0\,,
\end{equation}
where:
\begin{enumerate}
\item $m \colon\Rd\times\Act \mapsto \Rd$ for a prescribed compact metric `control'
space $\Act$, is continuous and locally Lipschitz in its first argument uniformly
with respect to the second, and satisfies, for some constant $C$,
\begin{equation*}
\langle m(x,u),x\rangle \,\le\, C \abs{x}^2\quad\forall\,x,u\in\Rd\times\Act\,;
\end{equation*}

\item $\upsigma \colon \Rd \mapsto \Rd$ is Lipschitz and satisfies:
\begin{equation*}
\abs{\upsigma\transp(x)y}^2 \,\ge\,
c_0\,\abs{y}^2 \quad \forall\, x, y \in \Rd\,,
\end{equation*}
for some $c_0 > 0\,$;

\item $X_0$ is prescribed in law with bounded moments;

\item $\process{W}$ is a standard Brownian motion in $\Rd$ independent of $X_0\,$;

\item $\process{U}$ is a $\Act$-valued process with measurable paths, satisfying
the `non-anticipativity condition':
for all $t > s \ge 0$, $W_t - W_s$ is independent of $\sF_s$, which
is defined as the right-continuous completion of
$\sigma(X_r, U_r, r \le s)$.
We call such $\process{U}$ \emph{admissible}, and let $\Uadm$ denote
the class of these controls.
\end{enumerate}

We shall use the relaxed  control formulation,
that is, $\Act = \cP(\Act_0)$ where $\Act_0$ is compact metric and $\cP(\Act)$
is the compact Polish space of probability measures on $\Act$
 with the Prokhorov topology, and furthermore, $m$ is of the form
$$m(x, u) \,=\, \int_{\Act_0} m_0(x, y)u(\D y)$$
for some $m_0\colon \Rd\times \Act_0 \mapsto \Rd$ which is continuous and Lipschitz
in its first argument uniformly with respect to the second.
We also define the special control class of \emph{stationary controls} wherein
$U_t = v(X_t)$ for some measurable $v \colon \Rd \mapsto\Act$, identified with
the map $v$ by standard abuse of terminology and termed
\emph{stationary control policy}.
We let $\Usm$ denote the class of these policies.
Under any $v\in\Usm$, the SDE in \cref{sde} has a unique strong solution,
and this is a strong Markov process \cite{Gyongy-96}.
We let $\Prob^v_x$ and $\Exp^v_x$ denote the probability measure
and the expectation operator, respectively, on the canonical space of the process
$\process{X}$ controlled by $v\in\Usm$ and with initial condition $X_0=x$.
We extend this definition to the class of admissible controls, and
use $\Prob^U_x$ and $\Exp^U_x$ for $U\in\Uadm$.
For $v\in\Usm$, we also use the simplified notation
\begin{equation*}
m_v(x)\,\df\, m\bigl(x,v(x)\bigr)\,.
\end{equation*}

We introduce the following notation for the controlled extended generator
of $\process{X}$:
\begin{equation*}
\Lg f(x,u) \,\df\,  \frac{1}{2}\trace\left(a(x)\nabla^2f(x)\right)
+ \bigl\langle m(x,u),\nabla f(x)\bigr\rangle
\quad\text{for\ } f \in C^2(\Rd)\,,
\end{equation*}
with
\begin{equation*}
a(x) \,\df\, \upsigma(x)\upsigma\transp(x)\,,\quad x\in\Rd\,.
\end{equation*}
Under a stationary policy $v$ as above,
we denote $\Lg f\bigl(x,v(x)\bigr)$ as $\Lg_vf(x)$.

Let $D$ be a bounded domain with $\Cc^{2,1}$ boundary,
which is kept fixed throughout the paper,
and $\uptau\equiv\uptau(D)$, the first exit time from $D$
(see \cref{E-uptau}).
In this work, we consider the problem of minimizing
the \emph{rate of exit} from $D$ over all admissible controls, that is
\begin{equation}\label{E-beta*}
\beta^*(x) \,\df\, \inf_{U\in\Uadm}\,\biggl(-\limsup_{T\nearrow\infty}\,
\frac{1}{T}\,\log \Prob_x^U(\tau > T)\biggr)\,.
\end{equation}
It turns out that $\beta^*$ is independent of $x$.
Note that
\begin{equation*}
\Prob_x^U(\tau > T) \,=\,\Prob_x^U\bigl(X_t \in D\,,\; t \in [0,T]\bigr)\,.
\end{equation*}

Consider the process under a given control $v\in\Usm$.
Then it is well known \cite{BB} that the rate of exit
\begin{equation}\label{E-betav}
\beta_v \,\df\, -\limsup_{T\nearrow\infty}\,
\frac{1}{T}\,\log \Prob_x^v(\tau > T)
\end{equation}
is equal to the principal eigenvalue of the operator $\Lg_v$ on $D$, which is
defined as
\begin{equation}\label{E-lambdav}
\lambda_v \,\df\, \sup\, \bigl\{\lambda\in\RR\,\colon
\exists\, \phi\in\Sobl^{2,d}(D),\, \phi>0\text{\ in\ }D,\, \Lg_v\phi+\lambda\phi\le0
\text{\ in\ }D\bigr\}\,.
\end{equation}
It is also well known \cite{BNV} that $\lambda_v$ is a simple eigenvalue, and its
eigenvector, or eigenfunction, is the unique solution $\Psi_v$
in $\Sobl^{2,p}(D)\cap\Cc(\overline{D})$, for any $p\ge d$, to the Dirichlet problem
\begin{equation}\label{E-Dirv}
\begin{aligned}
\Lg_v\Psi_v(x) + \lambda_v\Psi_v(x) &\,=\, 0\text{\ \ a.e.\ }x\text{\ in\ } D\,,\\
\Psi_v &\,>\,0\text{\ \ in\ }D\,,\\
 \Psi_v &\,=\,0\text{\ \ on\ }\partial D\,.
\end{aligned}
\end{equation}
Uniqueness of the eigenfunction is, of course, up to scalar multiplication with
a positive constant, and keeping that in mind, we use
the term unique without any reference to a specific normalization.
For such Dirichlet eigenvalue problems, we refer to $(\lambda_v,\Psi_v)$ as
an \emph{eigenpair}.

Going back to the optimal rate $\beta^*$ in \cref{E-beta*}, we define the
semilinear operator $\cG^*$ by
\begin{equation}\label{E-cGmax}
\cG^* f(x) \,\df\, \sup_{u\in\Act}\, \Lg f(x,u)\,,
\end{equation}
and denote its principal eigenvalue in $D$ as $\lamstr$, which is
defined exactly as in \cref{E-lambdav} by replacing $\Lg_v$ with $\cG^*$.
Then, as shown in \cite{BB}, $\beta^*=\lamstr = \inf_{v\in\Usm}\lambda_v$,
and again, $\lamstr$ is a simple eigenvalue, and its eigenfunction
$\Psi^*\in\Cc^{2}(\overline{D})$
is the unique solution to the Dirichlet problem
\begin{equation}\label{E-Dir*}
\begin{aligned}
\max_{u\in\Act}\,
\Lg\Psi^*(x,u) + \lamstr\Psi^*(x) &\,=\, 0\text{\ \ a.e.\ }x\text{\ in\ } D\,,\\
\Psi^* \,>\,0\text{\ \ in\ }D\,,&\qquad
\Psi^* \,=\,0\text{\ \ on\ }\partial D\,.
\end{aligned}
\end{equation}

We note here that the Lemma and Theorem in \cite{BB} actually state that
$\lamstr = \inf_{v\in\Usm} \lambda_v$, rather than $\lamstr=\beta^*$
which was claimed above.
However, a close inspection shows that using the same argument in the first
part of their proof,
we obtain the upper bound $\lamstr\le\inf_{U\in\Uadm}\beta_U$, with
$\beta_U$ defined as in \cref{E-betav}, by replacing $v$ with $U\in\Uadm$.
So indeed, $\lamstr=\beta^*$ by the results in \cite{BB}.
Let $\Usms$ denote the set of measurable selectors from the minimizer
in \cref{E-Dir*}.
This set is non-empty by \cite{Benes-70}.
Then, as shown in the Theorem in \cite{BB}, $v\in\Usms$ if and only if
$\beta_v=\beta^*$.

\section{The \texorpdfstring{$Q$}{}-process}\label{S3}

Let $v$ be a generic element of $\Usm$.
Define $\psi_v\df\log\Psi_v$, with $\Psi_v$ the eigenfunction in \cref{E-Dirv}.
The $Q$-process $\process{\widetilde{X}}$ associated with the process
$\process{X}$ in \cref{sde}
on the domain $D$
is given by the It\^o stochastic differential equation
\begin{equation}\label{E-Q}
\D\widetilde{X}_t \,=\, \widetilde{m}_v(\widetilde{X}_t)\,\D{t}
 + \upsigma(\widetilde{X}_t)\,\D\widetilde{W}_t\,,
\end{equation}
with
\begin{equation*}
\widetilde{m}_v(x) \,\df\, m_v(x) + a(x)\nabla\psi_v(x)\,.
\end{equation*}
Note that its extended generator $\widetilde\Lg_v$ satisfies
\begin{equation}\label{E-TLgv}
\widetilde\Lg_v f \,\df\, \frac{1}{2} \trace\bigl(a\nabla^2 f\bigr)
+ \langle\widetilde{m}_v,\nabla f\rangle
\,=\, \Lg_v f + \langle a\grad\psi_v,\grad f\rangle\,.
\end{equation}
By \cref{E-Dirv,E-Dir*}, we also have the identities
\begin{equation}\label{E-eigenA}
\Lg_v\psi_v(x)
+\frac{1}{2}\babs{\upsigma\transp(x)\nabla\psi_v(x)}^2\,=\,
\widetilde\Lg_v\psi_v(x)
-\frac{1}{2}\babs{\upsigma\transp(x)\nabla\psi_v(x)}^2 \,=\, -\lambda_v\,,\quad x\in D\,,
\end{equation}
and
\begin{equation}\label{E-eigenB}
\max_{u\in\Act}\, \left(\Lg\psi^*(x,u)
+\frac{1}{2}\babs{\upsigma\transp(x)\nabla\psi^*(x)}^2\right)
\,=\, -\lamstr\,,\quad x\in D\,,
\end{equation}
with $\psi^*\df\log\Psi^*$.
We often use both operators $\Lg_v$ and $\widetilde\Lg_v$.

\begin{remark}\label{R3.1}
Since we often refer to the results in
\cites{ABis-18,ABS-19,ABis-19,ABBK-19,ABis-19b},
we want to caution the reader that in all these papers, the principal eigenvalues
are defined with the opposite sign.
In particular, with respect to its dependence on the domain $D$,
$\lambda_v$ in \cref{E-lambdav} satisfies $\lambda_v(D)>\lambda_v(D')$
whenever $\overline{D}\subset D'$.
\end{remark}

\begin{remark}\label{R3.2}
When applying It\^o's formula in equations such as \cref{E-eigenA},
in order to localize the martingales, we use the stopping time
$\uptau_\epsilon$ which is defined as the first exit time from the set
\begin{equation*}
D_\epsilon\,\df\, \bigl\{x\in D\,\colon \dist(x,\partial D)>\epsilon\bigr\}\,,
\end{equation*}
where `$\dist$' denotes the Euclidean distance.
\end{remark}

\subsection{Ergodicity of the \texorpdfstring{$Q$}{}-process}

It is well known that the $Q$-process is confined to $D$ and is geometrically
ergodic \cites{Pinsky-85,CV}.  This is in fact true for a more general class
of models.  In the theorem which follows we give an independent proof of
this fact for diffusions using PDE theory and stochastic calculus.
In the process, we present some important techniques concerning
 the eigenvalue problem. An important inequality which we use,
 and which can be easily verified, is the following:
\begin{equation}\label{E-nice}
\text{if\ \ } \Lg_v\Phi + F \Phi\,\le\,0\text{\ \ in\ }D\,,\quad\text{then\ \ }
\widetilde\Lg_v\Bigl(\frac{\Phi}{\Psi}\Bigr) \,\le\, (\lambda_v-F) \frac{\Phi}{\Psi_v}
\text{\ \ in\ }D\,.
\end{equation}
We also use the notation $A\Subset D$ to indicate that $\Bar{A}\subset D$.

\begin{theorem}\label{T3.1}
For any $v\in\Usm$, the $Q$-process in \cref{E-Q}
is confined to $D$ and is geometrically ergodic.
In particular, there exists $\Lyap_v\in\Sobl^{2,p}(D)$, $p>d$,
a compact set $K\subset D$, and positive constants $C_v$ and $\rho_v$, such that
\begin{equation}\label{ET3.1A}
\widetilde\Lg_v \Lyap_v(x) \,\le\, C_v\Ind_{K}(x) -  \rho_v\Lyap_v(x)
\qquad\forall\,x\in D\,,
\end{equation}
and $\Lyap_v\Psi_v$ is bounded from below away from $0$ in $D$.
As a result, $\E^{-\psi_v}$ is integrable under the invariant
probability measure $\Tilde\mu_v$ of the $Q$-process.
Moreover,
\begin{equation}\label{ET3.1B}
\lambda_v \,=\, \frac{1}{2} \int_{D} \babs{\upsigma\transp(x)\nabla\psi_v(x)}^2\,
\Tilde\mu_v(\D{x})\,.
\end{equation}
\end{theorem}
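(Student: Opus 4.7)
The plan is to construct a Foster--Lyapunov function of the form $\Lyap_v = \Phi/\Psi_v$ with $\Phi$ strictly positive and bounded below by a positive constant, feed this into standard $V$-geometric ergodicity theory, and then extract \cref{ET3.1B} by applying It\^o to $\psi_v$ started from the stationary law.

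The hint \cref{E-nice} reduces the search for $\Lyap_v$ to finding $\Phi\in\Sobl^{2,p}(D)$ with $\Phi\ge c>0$ in $D$ and some $F>\lambda_v$ satisfying $\Lg_v\Phi+F\Phi\le 0$ outside a compact set $K\Subset D$. I would take
\begin{equation*}
\Phi \,\df\, \Psi_v^\theta + c,\qquad \theta\in(0,1),\ c>0.
\end{equation*}
Using $\Lg_v\Psi_v=-\lambda_v\Psi_v$ and $\nabla\Psi_v=\Psi_v\nabla\psi_v$, a short computation yields
\begin{equation*}
\Lg_v\Phi + F\Phi \,=\, \Psi_v^\theta\Bigl[-\tfrac{\theta(1-\theta)}{2}\babs{\upsigma\transp\nabla\psi_v}^2 + (F-\theta\lambda_v)\Bigr] + Fc.
\end{equation*}
Since $\partial D\in\Cc^{2,1}$, boundary regularity for Dirichlet principal eigenfunctions together with the Hopf lemma gives $\Psi_v(x)\asymp\dist(x,\partial D)$ and $\babs{\nabla\Psi_v}$ bounded below near $\partial D$; combined with the uniform ellipticity of $a$ this yields $\babs{\upsigma\transp\nabla\psi_v(x)}^2\gtrsim\dist(x,\partial D)^{-2}$ there, and therefore $\Psi_v^\theta\babs{\upsigma\transp\nabla\psi_v}^2\gtrsim\dist(x,\partial D)^{\theta-2}\to\infty$. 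This forces the bracket above to dominate $Fc$ in a tubular neighborhood $D\setminus K$ of $\partial D$, giving $\Lg_v\Phi + F\Phi\le 0$ on $D\setminus K$ for $F=\lambda_v+\rho_v$ with any prescribed $\rho_v>0$. By \cref{E-nice}, $\widetilde\Lg_v\Lyap_v\le-\rho_v\Lyap_v$ on $D\setminus K$. On the compact $K$, $\Lg_v\Phi$ is bounded and $\Psi_v$ is bounded below, so $\widetilde\Lg_v\Lyap_v$ is bounded above by some constant, and \cref{ET3.1A} follows after the standard combination of the two regions. The bounded-below property $\Lyap_v\Psi_v=\Phi\ge c$ is automatic.

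Confinement of the $Q$-process to $D$ is obtained by applying It\^o to $-\psi_v(\widetilde{X}_{t\wedge\uptau_\epsilon})$: from \cref{E-eigenA} we have $\widetilde\Lg_v(-\psi_v)\le\lambda_v$, hence $\Exp[-\psi_v(\widetilde{X}_{t\wedge\uptau_\epsilon})]\le-\psi_v(\widetilde{X}_0)+\lambda_v t$, and since $\min_{\partial D_\epsilon}(-\psi_v)\to\infty$ as $\epsilon\to 0$ a Chebyshev-type estimate forces $\Prob(\uptau\le t)=0$ for every $t$. Existence of an invariant probability measure $\Tilde\mu_v$, geometric ergodicity, and $\int\Lyap_v\,\D\Tilde\mu_v<\infty$ then follow from \cref{ET3.1A} by standard Meyn--Tweedie theory, using that non-degeneracy of $a$ together with continuity of the coefficients makes the $Q$-process irreducible on $D$ with compact subsets petite. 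Since $\Lyap_v\ge c\,\E^{-\psi_v}$ pointwise, integrability of $\E^{-\psi_v}$ under $\Tilde\mu_v$ is immediate.

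For \cref{ET3.1B} the formal computation is to integrate $\widetilde\Lg_v\psi_v = \tfrac{1}{2}\babs{\upsigma\transp\nabla\psi_v}^2-\lambda_v$ from \cref{E-eigenA} against $\Tilde\mu_v$, but the unboundedness of $\psi_v$ forces a two-step approach. Starting the $Q$-process from $\Tilde\mu_v$ (under the normalization $\psi_v\le 0$), apply It\^o to $\psi_v(\widetilde{X}_{t\wedge\uptau_\epsilon})$ and take expectations; the upper bound $\Exp[\psi_v(\widetilde{X}_{t\wedge\uptau_\epsilon})]\le 0$, stationarity, and monotone convergence as $\epsilon\to 0$ yield
\begin{equation*}
\tfrac{1}{2}\int_D\babs{\upsigma\transp\nabla\psi_v}^2\,\D\Tilde\mu_v \,\le\, \lambda_v - \tfrac{1}{t}\int_D\psi_v\,\D\Tilde\mu_v,
\end{equation*}
and letting $t\to\infty$ gives $\tfrac{1}{2}\int\babs{\upsigma\transp\nabla\psi_v}^2\,\D\Tilde\mu_v\le\lambda_v$; in particular the gradient is square-integrable under $\Tilde\mu_v$. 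With integrability in hand, the martingale $M_t = \int_0^t\langle\upsigma\transp\nabla\psi_v,\D\widetilde{W}\rangle$ has $\langle M\rangle_t/t$ convergent to $\int\babs{\upsigma\transp\nabla\psi_v}^2\,\D\Tilde\mu_v<\infty$, so $M_t/t\to 0$ in probability. Tightness of $\psi_v(\widetilde{X}_t)$ under stationarity gives $\psi_v(\widetilde{X}_t)/t\to 0$ in probability; dividing the It\^o identity by $t$ and invoking the ergodic theorem for $\widetilde\Lg_v\psi_v\in\Lp^1(\Tilde\mu_v)$ yields $\int\widetilde\Lg_v\psi_v\,\D\Tilde\mu_v = 0$, which is exactly \cref{ET3.1B}.

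The main obstacle is this last step: the one-sided bound $\tfrac{1}{2}\int\babs{\upsigma\transp\nabla\psi_v}^2\,\D\Tilde\mu_v\le\lambda_v$ is easy to obtain directly from It\^o, but promoting it to equality requires the two-step procedure, with the preliminary integrability being the wedge that legitimizes the LLN for $M_t/t$ and the ergodic theorem for $\widetilde\Lg_v\psi_v$.
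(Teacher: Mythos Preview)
Your proof is correct, but your Lyapunov construction differs from the paper's in an interesting way. You take $\Phi = \Psi_v^\theta + c$ and rely on the Hopf boundary-point lemma to get $\Psi_v(x)\asymp\dist(x,\partial D)$ and $\abs{\nabla\Psi_v}$ bounded below near $\partial D$; this is legitimate here because global $\Sob^{2,p}$ regularity on a $\Cc^{2,1}$ domain (with bounded measurable drift and Lipschitz $a$) gives $\Psi_v\in\Cc^{1,\alpha}(\overline{D})$, though the paper only states local regularity. The paper instead avoids any boundary analysis of $\Psi_v$: it takes $\Phi$ to be the principal eigenfunction of the perturbed operator $\Lg_v-\Ind_\sB$ on a slightly \emph{larger} domain $D'\Supset D$, so that $\inf_D\Phi>0$ is automatic and the strict eigenvalue gap $\lambda_{D'}(\Lg_v-\Ind_\sB)>\lambda_v$ feeds directly into \cref{E-nice} on all of $D$. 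Your construction is more explicit and elementary; the paper's is more robust (it needs only eigenvalue monotonicity in the domain and the potential, not the precise boundary behavior of $\Psi_v$) and, notably, is what generalizes cleanly to the uniform-in-$v$ Lyapunov function used later in the proof of the companion result on the controlled $Q$-process.

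For \cref{ET3.1B}, the paper dispatches the unboundedness of $\psi_v$ by observing that $\abs{\psi_v}/\Lyap_v\to0$ at $\partial D$ and citing \cite[Lemma~3.7.2]{ABG} to obtain both $\widetilde\Exp_x[\psi_v(\widetilde X_{t\wedge\uptau_\epsilon})]\to\widetilde\Exp_x[\psi_v(\widetilde X_t)]$ and $t^{-1}\widetilde\Exp_x[\abs{\psi_v(\widetilde X_t)}]\to0$, after which a single application of It\^o to \cref{E-eigenA} finishes. Your two-step route (first extract the one-sided bound to secure $\abs{\upsigma\transp\nabla\psi_v}^2\in\Lp^1(\Tilde\mu_v)$, then combine the pathwise ergodic theorem with $M_t/t\to0$) is self-contained and perfectly sound; it is essentially what that cited lemma packages.
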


\begin{proof}
In the interest of economy
of notation, we drop the explicit dependence on $v$ from
$\lambda_v$, $\Lg_v$, and $\Psi_v$.
Let $\sB\Subset D$ be a nonempty open set.
Then  the principal eigenvalue   of the
operator $\Lg-\Ind_\sB$ on $D$,
denoted as $\lambda_D(\Lg-\Ind_\sB)$,
satisfies $\lambda_D(\Lg-\Ind_\sB)>\lambda$ \cite[Lemma~2.1\,(b)]{ABS-19}.
Hence, by the monotonicity and continuity of the principal eigenvalue
with respect to the domain \cite[Lemma~2.2\,(a)]{ABis-18}
(see also the more general result in \cite[Corollary~2.1]{ABis-19b}),
there exists a bounded $\Cc^{2,1}$ domain $D'\Supset D$, such that
\begin{equation*}
\lambda_D(\Lg-\Ind_\sB)\,>\,\lambda_{D'}(\Lg-\Ind_\sB)\,>\,\lambda\,.
\end{equation*}
Note that \cref{R3.1} applies to these assertions.
Let $\Phi$ denote the principal eigenfunction of $\Lg-\Ind_\sB$ on $D'$.
Then we have
\begin{equation*}
\Lg \Phi +\bigl(\lambda_{D'}(\Lg-\Ind_\sB)-\Ind_\sB\bigr)\Phi\,=\,0
\quad\text{in\ }D'\,.
\end{equation*}
Therefore, by \cref{E-nice}, we obtain
\begin{equation}\label{PT3.1B}
\widetilde\Lg\Bigl(\frac{\Phi}{\Psi}\Bigr) \,\le\,
\Bigl(\Ind_\sB+\lambda-\lambda_{D'}(\Lg-\Ind_\sB)\Bigr)\,\frac{\Phi}{\Psi}\,.
\end{equation}
Since $\lambda_D(\Lg-\Ind_\sB)>\lambda$, and
$\inf_{D}\,\Phi>0$ by the continuity and positivity of
$\Phi$ on $D'$, the Foster--Lyapunov inequality
in \cref{ET3.1A} is implied by \cref{PT3.1B}
with $\Lyap_v \df \frac{\Phi}{\Psi}$.
It is clear that $\Lyap_v(x)\to\infty$
as $x\to\partial D$.
Geometric ergodicity of the $Q$-process then follows by
\cite[Theorem~5.2]{Down-Meyn-Tweedie-1995}.

It is fairly standard to show that \cref{ET3.1A} implies that
the $Q$-process is confined to $D$.
Indeed, using the It\^o formula,
and with $\uptau_\epsilon$ and $D_\epsilon$ as in \cref{R3.2}, we obtain
\begin{equation*}
\widetilde\Exp_x^v\bigl[\Lyap_v(\widetilde{X}_{t\wedge\uptau_\epsilon})\bigr]
\,\le\, \Lyap_v(x) + C_v t\,,
\end{equation*}
which implies by the Markov inequality that
\begin{equation*}
\widetilde\Prob_x(\uptau_\epsilon<t)\,\le\,
\Bigl(\inf_{\partial D_\epsilon}\,\Lyap_v\Bigl)^{-1}\bigl(\Lyap_v(x)+C_v t\bigr)\,.
\end{equation*}
Thus, since $\Lyap_v$ is inf-compact, we have
$\widetilde\Prob_x(\uptau_\epsilon<t)\to0$
as $\epsilon\searrow0$ for all $x\in D$ and $t>0$.

Since $\Lyap_v\ge c\, \E^{-\psi}$ for some positive constant $c$,
and $\Lyap_v$ is inf-compact, it follows that $\frac{\abs{\psi(x)}}{\Lyap_v(x)}$ tends
to $0$ as $\dist(x,\partial D)\to0$.
By \cite[Lemma~3.7.2]{ABG}, then we have
\begin{equation}\label{PT3.1C}
\widetilde\Exp_x\bigl[\psi(X_{t\wedge\uptau_\epsilon})\bigr]
\,\xrightarrow[\epsilon\searrow0]{}\,
\widetilde\Exp_x\bigl[\psi(X_{t})\bigr]\,,\quad\text{and\ \ }
\lim_{t\to\infty}\,\frac{1}{t}\,\widetilde\Exp_x\bigl[\abs{\psi(X_{t})}\bigr]\,=\,0
\end{equation}
for all $x\in D$.
Thus applying It\^o's formula to $\psi$,  using \cref{E-eigenA},
\cref{R3.2} and \cref{PT3.1C}, we obtain \cref{ET3.1B}.
\end{proof}

The method of proof of \cref{T3.1} leads to a another interesting result.
Let $\cG_*$ denote the minimal operator (compare with \cref{E-cGmax})
\begin{equation}\label{E-cGmin}
\cG_* f(x) \,\df\, \inf_{u\in\Act}\, \Lg f(x,u)\,,
\end{equation}
and denote its principal eigenpair in $D$ as $(\lambda_*,\Psis)$.
It is clear that $\lambda_*\ge\lamstr$, and that for a generic drift $m$
we should have strict inequality.
Let $\psis=\log\Psis$, which satisfies
\begin{equation*}
\max_{u\in\Act}\, \Lg\psis(x,u)
+\frac{1}{2}\babs{\upsigma\transp(x)\nabla\psis(x)}^2 \,=\, -\lamstr\,,\quad x\in D\,.
\end{equation*}
Consider the controlled diffusion
\begin{equation}\label{E-Qmin}
\D{Y}_t \,=\, \bigl(m(Y_t,U_t) + a(Y_t)\grad\psis(Y_t)\bigr)\,\D{t}
 + \upsigma(Y_t)\,\D{W}_t\,.
\end{equation}
Denote its extended controlled generator as $\Lg^{\psis}$, that is,
\begin{equation*}
\Lg^{\psis} f(x,u) \,\df\,  \frac{1}{2}\trace\left(a(x)\nabla^2f(x)\right)
+ \bigl\langle m(x,u)+a(x)\grad\psis(x),\nabla f(x)\bigr\rangle
\quad\text{for\ } f \in C^2(D)\,.
\end{equation*}
As done earlier, under $v\in\Usm$, we denote
 $\Lg^{\psis} f\bigl(x,v(x)\bigr)$ as $\Lg^{\psis}_vf(x)$.

 We have the following result, which in a way extends \cref{T3.1}.

\begin{theorem}
The process $\process{Y}$ in \cref{E-Qmin} is confined to $D$,
and is geometrically ergodic
uniformly over $v\in\Usm$.
In particular, there exists $\widehat\Lyap\in\Cc^2(D)$,
a compact set $K\subset D$, and positive constants $C$ and $\rho$, such that
\begin{equation}\label{ET3.2A}
\Lg^{\psis} \widehat\Lyap(x,u) \,\le\, C\Ind_{K}(x) -  \rho\widehat\Lyap(x)
\qquad\forall\,(x,u)\in D\times\Act\,,
\end{equation}
and $\widehat\Lyap\,\Psis$ is bounded from below away from $0$ in $D$.
As a result, $\E^{-\psis}$ is integrable under the invariant
probability measure $\Breve\mu_v$ of $\process{Y}$ for any $v\in\Usm$.
Moreover,
\begin{equation}\label{ET3.2B}
\lambda_* \,\ge\, \frac{1}{2} \int_{D} \babs{\upsigma\transp(x)\nabla\psis(x)}^2\,
\Breve\mu_v(\D{x})\qquad\forall\,v\in\Usm\,.
\end{equation}
\end{theorem}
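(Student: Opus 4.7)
The plan is to mimic the proof of \cref{T3.1}, but with three adjustments that yield uniformity in $v\in\Usm$: work with the maximal semilinear operator $\cG^*$ instead of any single $\Lg_v$, perform the ground state transform with $\Psis$ (rather than the $v$-dependent $\Psi_v$), and introduce a multiplicative constant $C>0$ in the perturbation to push the perturbed eigenvalue strictly above $\lambda_*$, not merely above $\lamstr$. Concretely, I would look for $\widehat\Lyap$ of the form $\Phi/\Psis$, where $\Phi$ is the principal eigenfunction on a slightly enlarged domain $D'\Supset D$ of the semilinear operator $\cG^*-C\Ind_\sB$, for a fixed nonempty open set $\sB\Subset D$.

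The first task is thus the choice of $C$ and $D'$. Since $\cG^*\ge\cG_*$, we have $\lamstr\le\lambda_*$, and the naive perturbation $\cG^*-\Ind_\sB$ used in \cref{T3.1} only guarantees $\lambda_D(\cG^*-\Ind_\sB)>\lamstr$, which is generally insufficient here. But $\lambda_D(\cG^*-C\Ind_\sB)$ is nondecreasing in $C$ and tends to $+\infty$ as $C\nearrow\infty$; the monotonicity and domain-continuity results needed for the semilinear principal eigenvalue are available via \cite{ABBK-19} and the lemmas invoked in the proof of \cref{T3.1}. So I would choose $C>0$ with $\lambda_D(\cG^*-C\Ind_\sB)>\lambda_*$, and then $D'\Supset D$ with $\Cc^{2,1}$ boundary such that $\widehat\lambda\df\lambda_{D'}(\cG^*-C\Ind_\sB)>\lambda_*$, and let $\Phi$ be the corresponding strictly positive eigenfunction on $D'$. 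Then for every $u\in\Act$, $\Lg\Phi(x,u)\le(C\Ind_\sB(x)-\widehat\lambda)\Phi(x)$ a.e.\ in $D'$. Combining this with the HJB inequality $\Lg_v\Psis\ge-\lambda_*\Psis$ (valid for every $v\in\Usm$ because $\inf_u\Lg\Psis(x,u)=-\lambda_*\Psis(x)$) and the ground state transform identity $\Lg^{\psis}_v(\Phi/\Psis)=\Psis^{-1}\bigl[\Lg_v\Phi-(\Phi/\Psis)\Lg_v\Psis\bigr]$ (the same calculation that yields \cref{E-nice}, now applied with $\Psis$ in the role of $\Psi_v$) produces
\[
\Lg^{\psis}\widehat\Lyap(x,u)\,\le\,\bigl(C\Ind_\sB(x)+\lambda_*-\widehat\lambda\bigr)\widehat\Lyap(x)\qquad\forall\,(x,u)\in D\times\Act,
\]
which is \cref{ET3.2A} with $\rho\df\widehat\lambda-\lambda_*>0$ and $K\df\overline\sB$, using that $\widehat\Lyap$ is bounded on $K$ because $\Phi$ is continuous on $\overline D\subset D'$ and $\Psis>0$ there.

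The lower bound $\widehat\Lyap\,\Psis=\Phi\ge\inf_{\overline D}\Phi>0$ is immediate from the positivity and continuity of $\Phi$ on $\overline D\subset D'$, and since $\Psis\to0$ on $\partial D$, $\widehat\Lyap$ is inf-compact on $D$; the Markov-inequality argument of \cref{T3.1} applies verbatim to show confinement of $\process{Y}$ to $D$, and uniform geometric ergodicity across $v\in\Usm$ then follows from the uniform drift \cref{ET3.2A} by \cite[Theorem~5.2]{Down-Meyn-Tweedie-1995}. Integrability of $\E^{-\psis}=\Psis^{-1}$ under $\Breve\mu_v$ falls out of $\widehat\Lyap\ge c\,\E^{-\psis}$. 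Finally, \cref{ET3.2B} is obtained by applying It\^o's formula to $\psis$ along $\process{Y}$: the identity $\Lg^{\psis}_v\psis=\Lg_v\Psis/\Psis+\tfrac12\babs{\upsigma\transp\grad\psis}^2$ combined with $\Lg_v\Psis\ge-\lambda_*\Psis$ gives $\Lg^{\psis}_v\psis\ge-\lambda_*+\tfrac12\babs{\upsigma\transp\grad\psis}^2$, and localizing via $\uptau_\epsilon$ as in \cref{R3.2} and passing to limits exactly as in \cref{PT3.1C} delivers the desired inequality. The main obstacle in this plan is the very first step: securing $\widehat\lambda>\lambda_*$, which is precisely why the large constant $C$ must appear multiplicatively in the perturbation rather than the bare indicator used in \cref{T3.1}.
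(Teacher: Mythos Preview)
Your overall architecture coincides with the paper's: set $\widehat\Lyap=\Phi/\Psis$ with $\Phi$ the principal eigenfunction of a perturbation of $\cG^*$ on a slightly enlarged domain, combine the upper bound $\Lg\Phi(x,u)\le(C\Ind_\sB-\widehat\lambda)\Phi$ with the lower bound $\Lg_v\Psis\ge-\lambda_*\Psis$ via the ground-state identity, and then deduce confinement, uniform geometric ergodicity, integrability of $\E^{-\psis}$, and \cref{ET3.2B} exactly as in \cref{T3.1}. All of that is sound.

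The gap is precisely at the step you yourself flag as the main obstacle. For a \emph{fixed} nonempty open $\sB\Subset D$, the map $C\mapsto\lambda_D(\cG^*-C\Ind_\sB)$ is indeed nondecreasing, but it does \emph{not} tend to $+\infty$: it is bounded above by $\lambda_{D\setminus\overline\sB}(\cG^*)<\infty$. To see this, let $\Phi_C$ be the eigenfunction on $D$; on $D\setminus\overline\sB$ one has $\cG^*\Phi_C+\lambda_D(\cG^*-C\Ind_\sB)\,\Phi_C=0$ with $\Phi_C>0$, and the supersolution characterization of the principal eigenvalue on $D\setminus\overline\sB$ then forces $\lambda_D(\cG^*-C\Ind_\sB)\le\lambda_{D\setminus\overline\sB}(\cG^*)$. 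Since $\lambda_*-\lamstr$ can exceed $\lambda_{D\setminus\overline\sB}(\cG^*)-\lamstr$ when $\sB$ is small, no choice of $C$ alone secures $\widehat\lambda>\lambda_*$.

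The remedy is to let the perturbation set grow rather than (or in addition to) the constant. The paper fixes the constant at $2\lambda_*$ and takes $\sB=D_\epsilon$: since $\Ind_{D_\epsilon}\nearrow1$ on $D$ as $\epsilon\searrow0$, one has $\lambda_D(\cG^*-2\lambda_*\Ind_{D_\epsilon})\nearrow\lambda_D(\cG^*-2\lambda_*)=\lamstr+2\lambda_*>\lamstr+\lambda_*$, so some small $\epsilon$ (replaced by a smooth $h$ for regularity) and then some $D'\Supset D$ give an auxiliary eigenvalue exceeding $\lamstr+\lambda_*$; after the ground-state transform this yields $\Lg^{\psis}(\Phi/\Psis)(x,u)\le(2\lambda_*h(x)-\lamstr)\Phi/\Psis$, hence $\rho=\lamstr>0$. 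Equivalently, your scheme works provided you first choose $\sB\Subset D$ large enough that $\lambda_{D\setminus\overline\sB}(\cG^*)>\lambda_*$, and only then take $C$ large.
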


\begin{proof}
Let $D_\epsilon$ be as in \cref{R3.2}.
It is clear that $\lambda_D(\cG^* - 2\lambda_* \Ind_{D_\epsilon})$
is nondecreasing in $\epsilon$, and
converges to $\lamstr+2\lambda_*$ as $\epsilon\searrow0$.
Therefore, we can select some $\epsilon>0$ and a smooth
function $h\colon D\to[0,1]$ which is compactly supported in $D$ and
equals $1$ on $D_\epsilon$, such that
$\lambda_D(\cG^* - 2\lambda_*h)> \lamstr+\lambda_*$.
Thus, by the monotonicity of the principal eigenvalue
with respect to the domain, we can select a bounded $\Cc^{2,1}$ domain $D'\Supset D$
as in the proof of \cref{T3.1},
such that $\lambda_{D'}(\cG^* - 2\lambda_*h)> \lamstr+\lambda_*$.
Let $\Phi\in\Cc^2(D')$ denote the associated principal eigenfunction in $D'$.
Thus we have
\begin{equation*}
\cG^*\Phi +(\lamstr+\lambda_*-2\lambda_*h)\Phi\,\le\,0\quad\text{in\ }D'\,,
\end{equation*}
which of course implies that
\begin{equation}\label{PT3.2A}
\Lg\Phi(x,u) + \bigl(\lamstr+\lambda_*-2\lambda_*h(x)\bigr)\Phi(x)\,\le\,0
\qquad\forall\,(x,u)\in D\times\Act\,.
\end{equation}
On the other hand, using the eigenfunction $\Psis$ of
the operator $\cG_*$ in \cref{E-cGmin}, we obtain
\begin{equation}\label{PT3.2B}
\Lg\Psis(x,u) + \lambda_*\Psis(x)\,\ge\,0
\qquad\forall\,(x,u)\in D\times\Act\,.
\end{equation}
Combining \cref{PT3.2A,PT3.2B}, and using \cref{E-nice}, we get
\begin{equation*}
\Lg^{\psis}\Bigl(\frac{\Phi}{\Psis}\Bigr)(x,u) \,\le\,
\bigl(2\lambda_*h(x)-\lamstr)\,\frac{\Phi}{\Psis}(x)\,.
\end{equation*}
This establishes \cref{ET3.2A}, from which the integrability of $\E^{-\psis}$
and the property that the process is confined to $D$ follow as in the proof of the preceding theorem.
Also, \cref{ET3.2A} implies uniform geometric ergodicity.
This follows exactly as in the proof of \cite[Theorem~2.1\,(b)]{AHP18}
(see also \cite[Lemma~2.5.5]{ABG}).

In analogy to \cref{E-eigenA} we have
\begin{equation*}
\Lg^{\psis}_v\psis(x) -\frac{1}{2}\babs{\upsigma\transp(x)\nabla\psis(x)}^2
\,\ge\, -\lambda_v\,,\quad x\in D\,,
\end{equation*}
from which \cref{ET3.2B} follows.
This completes the proof.
\end{proof}

\subsection{The quasi-stationary distribution}

An application of the It\^o formula to the first equation in \cref{E-Dirv}
shows that
\begin{equation*}
\E^{-\lambda_v t}\E^{\psi_v(x)} \,=\, \Exp^v_x\Bigl[ \E^{\psi_v(X_t)}\,
\Ind_{\{t<\uptau\}}\Bigr]\,,\quad x\in D\,.
\end{equation*}
Let
\begin{equation*}
\sM_t\,\df\,\exp\biggl(\int_0^t \bigl\langle
\upsigma\transp(X_s)\grad\psi_v(X_s),\D W_s\bigr\rangle\,\D{s}
- \frac{1}{2}
\int_0^t \babs{\upsigma\transp(X_s)\nabla\psi_v(X_s)}^2\,\D{s}\biggr)\,.
\end{equation*}
Let $\widetilde\Prob^v_x$ and $\widetilde\Exp^v_x$ denote the probability measure
and the expectation operator, respectively, on the canonical space of the process
$\process{\widetilde{X}}$ in \cref{E-Q} controlled by $v\in\Usm$,
 and with initial condition $X_0=x$.
Then,
 for any bounded function $g$ which is compactly supported in $D$,
and $\uptau_\epsilon$ as in \cref{R3.2}, we get
\begin{equation*}
\begin{aligned}
\Exp^v_x\Bigl[g(X_t)\,\Ind_{\{t < \uptau_\epsilon\}}\Bigr]
&\,=\,  \Exp^v_x \Bigl[ g(X_{t\wedge\uptau_\epsilon})\,
\Ind_{\{t < \uptau_\epsilon\}}\Bigr]\\[3pt]
&\,=\, \Exp^v_x\Bigl[\E^{-\lambda_v (t\wedge\uptau_\epsilon)}g(X_{t\wedge\uptau_\epsilon})
\exp\bigl(-\psi_v(X_{t\wedge\uptau_\epsilon}) + \psi_v(x)\Bigr)
\sM_{t\wedge\uptau_\epsilon}\,\Ind_{\{t < \uptau_\epsilon\}}\Bigr]\\[3pt]
&\,=\, \widetilde\Exp^v_x\Bigl[\E^{-\lambda_v (t\wedge\uptau_\epsilon)}
\,g(\widetilde{X}_{t\wedge\uptau_\epsilon}) \,
\exp\bigl(-\psi_v(\widetilde{X}_{t\wedge\uptau_\epsilon}) + \psi_v(x)\bigr)
\,\Ind_{\{t < \uptau_\epsilon\}}\Bigr]\\[3pt]
&\,=\, \E^{-\lambda_v t} \E^{\psi_v(x)}\,\widetilde\Exp^v_x \Bigl[g(\widetilde{X}_t) \,
\exp\bigl(-\psi_v(\widetilde{X}_t)\bigr)\,\Ind_{\{t < \uptau_\epsilon\}}\Bigr]\,,
\end{aligned}
\end{equation*}
where in the third equality we use Girsanov's theorem.
Letting $\epsilon\searrow0$, we obtain
\begin{equation}\label{E-GirC}
\begin{aligned}
\Exp^v_x\Bigl[g(X_t)\,\Ind_{\{t < \uptau\}}\Bigr]
&\,=\, \E^{-\lambda_v t} \E^{\psi_v(x)}\,\widetilde\Exp^v_x \Bigl[g(\widetilde{X}_t) \,
\exp\bigl(-\psi_v(\widetilde{X}_t)\bigr)\,\Ind_{\{t < \uptau\}}\Bigr]\\[3pt]
&\,=\, \E^{-\lambda_v t} \E^{\psi_v(x)}\,\widetilde\Exp^v_x \Bigl[g(\widetilde{X}_t) \,
\exp\bigl(-\psi_v(\widetilde{X}_t)\bigr)\Bigr]\,,
\end{aligned}
\end{equation}
where we drop
the term $\Ind_{\{t < \uptau\}}$ in the second equality since the process
$\process{\widetilde{X}}$ is confined to $D$.
Then by monotone convergence we can extend \cref{E-GirC} to all
nonnegative bounded functions $g$.
Since $\Psi_v$ vanishes on $\partial{D}$, we also obtain from \cref{E-GirC}
using monotone convergence that
\begin{equation*}
\Exp^v_x\Bigl[h(X_t)\exp\bigl(\psi_v(X_t)\bigr)\,\Ind_{\{t < \uptau\}}\Bigr]
\,=\,
\E^{-\lambda_v t} \E^{\psi_v(x)}\,\widetilde\Exp^v_x
\Bigl[h(\widetilde{X}_t)\Bigr]
\end{equation*}
for all bounded functions $h$.

By \cref{E-GirC}, we obtain
\begin{equation}\label{E-GirE}
\Exp^v_x\bigl[g(X_t)\mid  t < \uptau\bigr]
\,=\, \frac{\Exp^v_x\Bigl[g(X_t)\,\Ind_{\{t < \uptau\}}\Bigr]}
{\Exp^v_x\Bigl[\Ind_{\{t < \uptau\}}\Bigr]}
\,=\, \frac{\widetilde\Exp^v_x \Bigl[g(\widetilde{X}_t) \,
\exp\bigl(-\psi_v(\widetilde{X}_t)\bigr)\Bigr]}
{\widetilde\Exp^v_x \Bigl[
\exp\bigl(-\psi_v(\widetilde{X}_t)\bigr)\Bigr]}\,.
\end{equation}
Since the $Q$-process is
ergodic, and $\E^{-\psi_v}$ is integrable under its invariant distribution
$\Tilde\mu_v$ by \cref{T3.1}, then
taking limits in \cref{E-GirE} and using \cite[Theorem~4.12]{Ichihara-12},
we obtain
\begin{equation}\label{E-GirF}
\lim_{t\to\infty}\,\Exp^v_x \bigl[ g(X_t)\bigm| t < \uptau\bigr]
\,=\, \frac{\int_{D} g(y) \E^{-\psi_v(y)}\,\Tilde\mu_v(\D{y})}
{\int_{D} \E^{-\psi_v(y)}\,\Tilde\mu_v(\D{y})}\,.
\end{equation}

We have the following theorem.
\begin{theorem}\label{T3.3}
It holds that
\begin{equation}\label{ET3.3A}
\alpha_v(A)\,\df\, \lim_{t\to\infty}\, \Prob^v_x(X_t\in A\,|\,t<\uptau)
\,=\, \frac{\int_{D} \Ind_A(y)\, \E^{-\psi_v(y)}\,\Tilde\mu_v(\D{y})}
{\int_{D} \E^{-\psi_v(y)}\,\Tilde\mu_v(\D{y})}\,,
\quad A\in\fB(D)\,,
\end{equation}
where $\fB(D)$ denotes the class of Borel sets in $D$.
In addition,
\begin{equation}\label{ET3.3B}
\Tilde\mu_v(A) \,=\,\frac{\int_{D} \Ind_A(y)\, \E^{\psi_v(y)}\,\alpha_v(\D{y})}
{\int_{D} \E^{\psi_v(y)}\,\alpha_v(\D{y})}\,,
\end{equation}
and
\begin{equation}\label{ET3.3C}
\lim_{t\to\infty}\, \E^{\lambda_v t}\,\Prob^v_x(t<\uptau)
\,=\, \E^{\psi_v(x)}\int_{D} \E^{-\psi_v(y)}\,\Tilde\mu_v(\D{y})\,.
\end{equation}
\end{theorem}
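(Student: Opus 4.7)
The plan is that all three statements follow essentially by specialization or inversion of the Girsanov identity \cref{E-GirC} and the limiting relation \cref{E-GirF} that were just established, together with the ergodic properties of the $Q$-process from \cref{T3.1}.

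\emph{Step 1: Prove \cref{ET3.3A}.} The identity \cref{E-GirF} was derived for bounded $g$; taking $g=\Ind_A$ with $A\in\fB(D)$ is legitimate since indicators are bounded. The left-hand side of \cref{E-GirE} applied to $g=\Ind_A$ is exactly $\Prob^v_x(X_t\in A\mid t<\uptau)$, so passing to the limit as in \cref{E-GirF} yields \cref{ET3.3A}. (If one prefers, a monotone class argument extends \cref{E-GirF} to all bounded Borel $g$, then to $g=\Ind_A$.)

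\emph{Step 2: Prove \cref{ET3.3B}.} The formula \cref{ET3.3A} says that $\alpha_v$ is absolutely continuous with respect to $\Tilde\mu_v$ with density
\begin{equation*}
\frac{\D\alpha_v}{\D\Tilde\mu_v}(y) \,=\, \frac{\E^{-\psi_v(y)}}{\int_D \E^{-\psi_v}\,\D\Tilde\mu_v}\,,
\end{equation*}
where the denominator is finite and positive by the integrability statement in \cref{T3.1}. Since this density is strictly positive on $D$, we may invert to conclude that $\Tilde\mu_v\ll\alpha_v$ with density proportional to $\E^{\psi_v}$; the normalization constant is fixed by requiring $\Tilde\mu_v(D)=1$, which gives precisely the ratio in \cref{ET3.3B}.

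\emph{Step 3: Prove \cref{ET3.3C}.} Take $g\equiv 1$ in \cref{E-GirC} (or the monotone-class extension thereof): this gives
\begin{equation*}
\Prob^v_x(t<\uptau) \,=\, \E^{-\lambda_v t}\,\E^{\psi_v(x)}\,
\widetilde\Exp^v_x\bigl[\E^{-\psi_v(\widetilde X_t)}\bigr]\,.
\end{equation*}
Multiplying through by $\E^{\lambda_v t}$, the statement \cref{ET3.3C} reduces to showing
\begin{equation*}
\widetilde\Exp^v_x\bigl[\E^{-\psi_v(\widetilde X_t)}\bigr]
\,\xrightarrow[t\to\infty]{}\, \int_D \E^{-\psi_v(y)}\,\Tilde\mu_v(\D y)\,.
\end{equation*}

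\emph{Main obstacle and resolution.} The only nontrivial point is this last limit, because the function $\E^{-\psi_v}=1/\Psi_v$ blows up at $\partial D$, so the convergence is not a direct consequence of total-variation ergodicity from \cref{T3.1}. The resolution is that \cref{T3.1} provides a Foster--Lyapunov function $\Lyap_v$ with $\Lyap_v\ge c\,\E^{-\psi_v}$, which yields the required uniform integrability of $\E^{-\psi_v(\widetilde X_t)}$ under $\widetilde\Prob^v_x$; this is exactly the setting in which \cite[Theorem~4.12]{Ichihara-12} (already invoked in deriving \cref{E-GirF}) gives convergence of expectations of unbounded $\Lyap_v$-dominated functions to the stationary integral. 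Applying it to the function $\E^{-\psi_v}\in\Lp^1(\Tilde\mu_v)$ completes Step~3, and hence the theorem.
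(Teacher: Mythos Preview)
Your proof is correct and follows essentially the same route as the paper: \cref{ET3.3A} by specializing \cref{E-GirF} to indicators, \cref{ET3.3B} by inverting the density, and \cref{ET3.3C} by setting $g\equiv 1$ in \cref{E-GirC} and passing to the limit. The paper's proof is terser because the convergence $\widetilde\Exp^v_x[\E^{-\psi_v(\widetilde X_t)}]\to\int_D\E^{-\psi_v}\,\D\Tilde\mu_v$ that you carefully justify in Step~3 was already invoked (via \cite[Theorem~4.12]{Ichihara-12} and the Lyapunov domination from \cref{T3.1}) in the derivation of \cref{E-GirF} just before the theorem, so the authors do not repeat it.
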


\begin{proof}
\Cref{ET3.3A} is a direct consequence of \cref{E-GirF}
with $g(\cdot) \equiv 1$, and
\cref{ET3.3B} follows by inverting \cref{ET3.3A}.
Lastly, \cref{ET3.3C} follows by taking limits in \cref{E-GirC}
with $g(\cdot) \equiv 1$.
\end{proof}

The probability measure $\alpha_v$ in \cref{T3.3} is known in the
literature as a \emph{quasi-stationary distribution} \cites{CV,Champagnat-17,Meleard-12}.
\Cref{ET3.3B} should be compared with \cite[Theorem~1.3]{CV},
and \cref{ET3.3C} with \cite[Proposition~1.2]{CV}.

The rate of convergence in \cref{ET3.3A} has been studied extensively
in the literature (see the results and discussion in \cite{CV}).
It follows from \cites{CV,Knobloch-10} that
for each $v\in\Usm$, there exist positive constants
$\kappa_v$ and $\gamma_v$ such that
\begin{equation*}
\bnorm{\Prob^v_x(X_t\in \,\cdot\,\,|\,t<\uptau) -\alpha_v(\cdot)}_{\mathsf{TV}}
\,\le\, \kappa_v\,\E^{-\gamma_v t}\qquad\forall\,(t,x)\in\RR_+\times D\,,
\end{equation*}
with $\norm{\,\cdot\,}_{\mathsf{TV}}$ denoting the total variation norm.


\section{Variational formulation}\label{S4}

In this section we first derive a  variational formula for the controlled eigenvalue
that can be viewed as an abstract Collatz--Wielandt formula as in
\cites{VenBor-17,ABis-19,ABBK-19,ABK} and then map it to an expression in terms of the
$Q$-process in view of the foregoing developments.

For $v\in\Usm$, let
\begin{equation*}
\begin{aligned}
\Ag_{v,w}f(x) &\,\df\,
\Lg_v f(x) + \bigl\langle a(x)w, \nabla f(x)\bigr\rangle\,,
\quad\text{for\ } f \in C^2(D)\cap C(\overline{D})\,,\\
\cH_v &\,\df\, \biggl\{\nu \in \cP(D\times\Rd) \,\colon
\int_{D\times\Rd}\Ag_{v,w}f(x)\,\nu(\D{x},\D{w}) = 0
\quad\forall \, f \in C^2(D)\cap C(\overline{D})\biggr\}\,,
\end{aligned}
\end{equation*}
where $\cP(D\times\Rd)$ denotes the set of probability measures
on the Borel $\sigma$-algebra of $D\times\Rd$.
We refer to $\cH_v$ as the set of \emph{infinitesimal ergodic occupation
measures} of the operator $\Ag_{v,w}$.
A measure $\nu\in\cH_v$ can be disintegrated into
$\mu(\D{x}) \eta(\D{w}\,|\, x)$, with $\eta$ corresponding to a
(randomized) stationary Markov control.
We wish to emphasize that the diffusion with generator $\Ag_{v,w}$
under such a control $\eta$ is not, in general, confined to $D$.
Therefore, the elements of $\cH_v$ are not necessarily
ergodic occupation measures of a controlled diffusion process.
Nevertheless, the measure $\Tilde\mu_v(\D{x})\delta_{\grad\psi_v}(\D{w})$
lies in $\cH_v$ and is indeed an ergodic occupation measure of
the controlled diffusion with generator $\Ag_{v,w}$.

We also define
\begin{equation}\label{Enufin}
\begin{aligned}
\cPsv &\,\df\,
\biggl\{\nu\in  \cP(D\times\Rd)\,\colon
\int_{D\times\Rd} \frac{\babs{\upsigma\transp(x)\grad\psi_v}^2}{1+\abs{\psi_v}}\,
\nu(\D{x},\D{w}) <\infty\biggr\}\,,\\
\cPov &\,\df\,
\biggl\{\nu\in  \cP(D\times\Rd)\,\colon
\int_{D\times\Rd} \babs{\upsigma\transp(x)w}^2\,\nu(\D{x},\D{w}) <\infty\biggr\}\,.
\end{aligned}
\end{equation}

In the proof of the results which follow, we make use of a family
of cutoff functions defined as follows.

\begin{definition}
For $t>0$, we let $\chi^{}_t$ be a convex $\Cc^2(\RR)$ function such that
$\chi^{}_t(s)= s$ for $s\ge -t$, and $\chi^{}_t(s) =\text{constant}$ for $s\le -t\E^2$.
Then $\chi'_t$ and $\chi''_t$ are nonnegative.
In addition, we select $\chi^{}_t$ so that
\begin{equation*}
\chi''_t(s) \,\le\, -\frac{1}{s}\qquad
\text{for\ } s\in[-t\E^2,-t]\ \ \text{and\ } t\ge0\,.
\end{equation*}
This is always possible.
\end{definition}

\begin{lemma}\label{L4.1}
For any $v\in\Usm$ we have
\begin{equation*}
\lambda_v \,=\, \inf_{\nu\in\cH_v\cap\cPsv}\,\frac{1}{2} \int_{D\times\Rd}
\babs{\upsigma\transp(x)w}^2\,\nu(\D{x},\D{w})\,.
\end{equation*}
\end{lemma}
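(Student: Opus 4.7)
\medskip

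\noindent\textbf{Proof plan for \Cref{L4.1}.}

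The plan is a two–sided estimate. For the upper bound I would use the explicit minimizer: the excerpt already notes that the measure
\[
\nu_*(\D{x},\D{w}) \,\df\, \Tilde\mu_v(\D{x})\,\delta_{\grad\psi_v(x)}(\D{w})
\]
lies in $\cH_v$. By \cref{ET3.1B} of \Cref{T3.1},
$\int \babs{\upsigma\transp\grad\psi_v}^2 \,\Tilde\mu_v = 2\lambda_v<\infty$,
which immediately gives
$\nu_*\in\cPsv$ (since the weight $(1+\abs{\psi_v})^{-1}\le 1$) and
$\tfrac{1}{2}\int \babs{\upsigma\transp w}^2\,\nu_* = \lambda_v$.
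So the nontrivial half of the lemma is the matching lower bound.

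For the lower bound, fix $\nu\in\cH_v\cap\cPsv$ with
$\int\babs{\upsigma\transp w}^2\,\nu<\infty$ (otherwise the inequality is vacuous, so $\nu\in\cPov$ in what follows). I would like to test $\nu$ against $\psi_v$ itself, using the identity \cref{E-eigenA}
$\Lg_v\psi_v = -\lambda_v - \tfrac12\babs{\upsigma\transp\grad\psi_v}^2$.
The obstruction is that $\psi_v\to-\infty$ at $\partial D$, so $\psi_v$ is not admissible as a test function for $\cH_v$.
I would resolve this using the cutoff family $\chi_t$ introduced just before the lemma: set $f_t \df \chi_t(\psi_v)$, which is bounded, belongs to $\Sobl^{2,p}(D)\cap\Cc(\overline{D})$ (extending by the constant $\chi_t(-\infty)$ on $\partial D$), and is thus a valid test function for $\cH_v$ after a routine approximation. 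A direct chain-rule computation, combined with \cref{E-eigenA} and the pointwise Cauchy--Schwarz bound
$\chi'_t(\psi_v)\langle\upsigma\transp w,\upsigma\transp\grad\psi_v\rangle
\le \tfrac12\chi'_t(\psi_v)\babs{\upsigma\transp w}^2 + \tfrac12\chi'_t(\psi_v)\babs{\upsigma\transp\grad\psi_v}^2$,
yields
\[
\Ag_{v,w}f_t \,\le\, -\lambda_v\,\chi'_t(\psi_v)
+ \tfrac12\chi'_t(\psi_v)\babs{\upsigma\transp w}^2
+ \tfrac12\chi''_t(\psi_v)\babs{\upsigma\transp\grad\psi_v}^2 .
\]
Integrating against $\nu$ kills the left-hand side (by definition of $\cH_v$) and leaves
\[
\lambda_v\int\chi'_t(\psi_v)\,\nu \,\le\,
\tfrac12\int\chi'_t(\psi_v)\babs{\upsigma\transp w}^2\,\nu
+ \tfrac12\int\chi''_t(\psi_v)\babs{\upsigma\transp\grad\psi_v}^2\,\nu .
\]

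The final step is to send $t\to\infty$. Since $0\le\chi'_t\le 1$ and $\chi'_t(\psi_v(x))\to 1$ pointwise on $D$, dominated convergence (using $\nu\in\cPov$ as the integrable majorant) gives $\int\chi'_t(\psi_v)\,\nu\to 1$ and $\int\chi'_t(\psi_v)\babs{\upsigma\transp w}^2\,\nu\to\int\babs{\upsigma\transp w}^2\,\nu$. The error term is precisely where the class $\cPsv$ is tailor–made: by construction $\chi''_t(s)$ is supported in $[-t\E^2,-t]$ with $\chi''_t(s)\le -1/s$, so
\[
\int\chi''_t(\psi_v)\babs{\upsigma\transp\grad\psi_v}^2\,\nu
\,\le\, 2\int_{\{\psi_v\le -t\}}\frac{\babs{\upsigma\transp\grad\psi_v}^2}{1+\abs{\psi_v}}\,\nu
\,\xrightarrow[t\to\infty]{}\, 0
\]
by absolute continuity of the integral, since $\nu\in\cPsv$. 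This yields $\lambda_v\le\tfrac12\int\babs{\upsigma\transp w}^2\,\nu$, completing the proof and showing the infimum is attained at $\nu_*$.

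\medskip
\noindent\emph{Anticipated difficulty.} The one genuine subtlety is legitimacy of $f_t=\chi_t(\psi_v)$ as a test function in the definition of $\cH_v$, because $\Psi_v$ only has $\Sobl^{2,p}$ regularity. I expect this to be handled by mollifying $\psi_v$ on compact subsets of $D$ and passing to the limit, using the interior $\Cc^{1,\alpha}$ regularity of $\Psi_v$ together with the bound $\chi'_t,\chi''_t\in\Lp^\infty(\RR)$; everything else is a calibrated convex-function/Cauchy--Schwarz calculation driven by the identity \cref{E-eigenA} and the integrability built into $\cPsv$.
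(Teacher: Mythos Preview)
Your proof is correct and follows essentially the same route as the paper: test $\nu\in\cH_v\cap\cPsv$ against the truncated function $\chi^{}_t(\psi_v)$, use the eigenvalue identity \cref{E-eigenA}, pass to the limit $t\to\infty$ with the $\chi''_t$ error term killed by the $\cPsv$ condition, and exhibit $\Tilde\mu_v(\D x)\,\delta_{\nabla\psi_v(x)}(\D w)$ as the minimizer via \cref{ET3.1B}. The only cosmetic difference is that the paper keeps the exact completing-the-square identity
$\langle a(x)w,\nabla\psi_v\rangle = L(x,w)+L(x,\nabla\psi_v)-\tfrac12\babs{\upsigma\transp(w-\nabla\psi_v)}^2$
in place of your Cauchy--Schwarz bound, thereby obtaining the sharper \emph{equality}
$\lambda_v + \tfrac12\int\babs{\upsigma\transp(w-\nabla\psi_v)}^2\,\nu = \int L\,\nu$
for every $\nu\in\cH_v\cap\cPsv\cap\cPov$; your inequality is just this identity with the nonnegative square discarded. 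The regularity issue you anticipate for $\chi^{}_t(\psi_v)$ as a test function is not addressed explicitly in the paper either.
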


\begin{proof}
We let
\begin{equation*}
L(x,w) \,\df\, \frac{1}{2} \abs{\upsigma\transp(x)w}^2\,.
\end{equation*}
Using the definition in \cref{E-TLgv}, we have the identity
\begin{equation}\label{PL4.1A}
\begin{aligned}
\Ag_{v,w}\psi_v(x)
&\,=\, \Lg_v \psi_v(x) +
\frac{1}{2}\babs{\upsigma\transp(x)\grad\psi_v(x)}^2+ L(x,w)
-\frac{1}{2}\babs{\upsigma\transp(x)\bigl(w-\grad\psi_v(x)\bigr)}^2\\
&\,=\,L(x,w)- \lambda_v
-\frac{1}{2}\babs{\upsigma\transp(x)\bigl(w-\grad\psi_v(x)\bigr)}^2\,,
\end{aligned}
\end{equation}
where the second equality follows from \cref{E-eigenA}.
Since
\begin{equation*}
\Ag_{v,w} \chi^{}_t(\psi_v) \,=\, \chi'_t(\psi_v) \Ag_{v,w}\psi_v
+ \tfrac{1}{2} \chi''_t(\psi_v)
\babs{\upsigma\transp\grad\psi_v}^2\,,
\end{equation*}
we obtain from \cref{PL4.1A} that
\begin{equation}\label{PL4.1B}
\begin{aligned}
\Ag_{v,w} \chi^{}_t\bigl(\psi_v(x)\bigr)&
- \chi''_t\bigl(\psi_v(x)\bigr)\,L\bigl(x,\grad\psi_v(x)\bigr)\\
& = \chi'_t\bigl(\psi_v(x)\bigr)\Bigl(
L(x,w) - \tfrac{1}{2} \babs{\upsigma\transp(x)\bigl(w-\grad\psi_v(x)\bigr)}^2
- \lambda_v\Bigr)\,.
\end{aligned}
\end{equation}
Let $\nu\in\cH_v\cap\cPsv$, and without loss of generality assume
$\nu\in\cPov$.
Then $\int \Ag_{v,w} \chi^{}_t\bigl(\psi_v(x)\bigr)\,\D\nu=0$ by
the definition of $\cH_v$,
and
\begin{equation*}
\int_{D\times\Rd}
\chi''_t\bigl(\psi_v(x)\bigr)\,\,L\bigl(x,\grad\psi_v(x)\bigr)\,\nu(\D{x},\D{w})
\,\xrightarrow[t\to\infty]{}\,0
\end{equation*}
by the definitions of $\chi^{}_t$ and $\cPsv$.
Thus, integrating \cref{PL4.1B} with
respect to $\nu$ and letting $t\to\infty$, using also monotone convergence,
we obtain
\begin{equation}\label{PL4.1C}
\lambda_v + \frac{1}{2} \int_{D\times\Rd}
\babs{\upsigma\transp(x)\bigl(w-\grad\psi_v(x)\bigr)}^2\,\nu(\D{x},\D{w})
\,=\,\int_{D\times\Rd} L(x,w)\,\nu(\D{x},\D{w})\,.
\end{equation}
As mentioned earlier, $\Tilde\mu_v(\D{x})\delta_{\grad\psi_v}(\D{w})\in\cH_v$,
and also lies in $\cPsv$ by \cref{ET3.1B}.
Therefore, the result follows by \cref{ET3.1B,PL4.1C}.
\end{proof}

We continue with a variational formula for $\lamstr$.
Let
\begin{equation*}
\Ag f(x,u,w) \,\df\, \Lg f(x,u) +\bigl\langle a(x)w, \nabla f(x)\bigr\rangle\,,
\quad (x,u,w)\in D\times\Act\times\Rd\,.
\end{equation*}
As in \cref{PL4.1A}, we have
\begin{equation}\label{EvarA}
\Ag f(x,u,w)\,=\, \Lg f(x,u) + \frac{1}{2}\babs{\upsigma\transp(x)\grad f(x)}^2+ L(x,w)
-\frac{1}{2}\babs{\upsigma\transp(x)\bigl(w-\grad f(x)\bigr)}^2
\end{equation}
for all $f\in\Cc^2(D)$,
and thus we obtain
\begin{equation*}
\begin{aligned}
\Ag \psi^*(x,u,w)- L(x,w) &\,=\,
\Lg \psi^*(x,u) +
\frac{1}{2}\babs{\upsigma\transp(x)\grad \psi^*(x)}^2
-\frac{1}{2}\babs{\upsigma\transp(x)\bigl(w-\grad \psi^*(x)\bigr)}^2\\
&\,\le\, -\lamstr -\frac{1}{2}\babs{\upsigma\transp(x)\bigl(w-\grad \psi^*(x)\bigr)}^2
\quad \forall\,(x,u,w)\in D\times\Act\times\Rd\,.
\end{aligned}
\end{equation*}

Recall from \cref{Snot} that $\Cc^2_+(D)$ denotes the set of functions
in $\Cc^2(D)$ which are positive on $D$.
The starting point of the analysis
 is \cite[Theorems~2.1 and 2.5]{ABis-19} which assert that
\begin{equation}\label{E-CW}
\begin{aligned}
-\lamstr &\,=\,
\adjustlimits\inf_{h\in\Cc^2_+(D)\cap\Cc(\overline{D})\;}\sup_{\mu\in\cP(D)}\;
\int_{D}\frac{\cG^* h(x)}{h(x)}\, \mu(\D{x})\\
&\,=\, \adjustlimits\sup_{\mu\in\cP(D)\;} \inf_{h\in\Cc^2_+(D)\cap\Cc(\overline{D})}\;
\int_{D}\frac{\cG^* h(x)}{h(x)}\, \mu(\D{x})\,,
\end{aligned}
\end{equation}
with $\cG^*$ as in \cref{E-cGmax}.
We claim that we may replace $\Cc^2_+(D)\cap\Cc(\overline{D})$ with
$\Cc^2_+(\overline{D})$ in the first equality of \cref{E-CW}.
To prove the claim first
note that \cref{E-CW} implies that
\begin{equation}\label{EvarC}
-\lamstr \,\le\, \adjustlimits\inf_{h\in\Cc^2_+(\overline{D})\;} \sup_{\mu\in\cP(D)}\;
\int_{D}\frac{\cG^* h(x)}{h(x)}\, \mu(\D{x})\,.
\end{equation}
Now let $\epsilon>0$ be arbitrary
and $D'\Supset D$ be a bounded $\Cc^{2,1}$ domain such that
$\lambda_{D'}(\cG^*) \ge \lamstr -\epsilon$.
Let $\Phi^*$ denote the principal eigenfunction of $\cG^*$ on $D'$.
Then $\Phi^*\in\Cc^2_+(\overline{D})$ and
$-\lamstr+\epsilon\ge\frac{\cG^*\Phi^*}{\Phi^*}$ on $\overline{D}$, which implies
equality in \cref{EvarC}.
Note that the infimum in \cref{EvarC} is not attained in
$\Cc^2_+(\overline{D})$, but working in this space allows us
to obtain a representation formula which does not rely on $\cPsv$
in \cref{Enufin} as is the case in \cref{L4.1}.

We define
\begin{equation*}
F(f,\uppi) \,\df\,
\int_{D\times\Act\times\Rd}
\bigl(\Ag f(x,u,w)-L(x,w)\bigr)\,\uppi(\D{x},\D{u},\D{w})
\end{equation*}
for $f\in\Cc^{2}(\overline{D})$
and $\uppi\in\cP(D\times\Act\times\Rd)$.
Consider $h\in\Cc^2_+(\overline{D})$, and let $f=\log h$.
Then $f\in\Cc^2(\overline{D})$, and using \cref{EvarA}, a simple calculation shows that
\begin{equation*}
\frac{\cG^*h(x)}{h(x)}\,=\, \sup_{(u,w)\in\Act\times\Rd}\,
\bigl[\Ag f(x,u,w) - L(x,w)\bigr]\,.
\end{equation*}
Combining this with \cref{EvarC}, for which we have already shown
that equality holds, we obtain
\begin{equation}\label{EvarD}
-\lamstr\,=\, \adjustlimits
\inf_{f\in\Cc^2(\overline{D})\;} \sup_{\uppi\in\cP(D\times\Act\times\Rd)}\;
F(f,\uppi)\,.
\end{equation}

Define
\begin{equation*}
\cM_\Ag \,\df\, \biggl\{\uppi \in \cP(D\times\Act\times\Rd)\, \colon
\int_{D\times\Act\times\Rd}\Ag f\,\D\uppi = 0
\quad\forall \, f \in C^2(\overline{D})\biggr\}\,.
\end{equation*}
We have the following result.

\begin{lemma}\label{L4.2}
It holds that
\begin{equation}\label{EL4.2A}
-\lamstr \,=\, \adjustlimits
 \sup_{\uppi\in\cP(D\times\Act\times\Rd)\;} \inf_{f\in\Cc^2(\overline{D})}\;
F(f,\uppi) \,=\,
-\inf_{\uppi\in\cM_\Ag}\,
\int_{D\times\Act\times\Rd} L(x,w)\,\uppi(\D{x},\D{u},\D{w})\,.
\end{equation}
\end{lemma}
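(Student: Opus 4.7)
The plan is to compute $\inf_{f\in\Cc^2(\overline{D})} F(f,\uppi)$ explicitly using the affine dependence on $f$, then invoke weak duality together with \cref{EvarD}, and finally exhibit an explicit minimizer in $\cM_\Ag$ built from the $Q$-process invariant measure of \cref{T3.1}.

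First, I treat the inner infimum. Since $f\mapsto F(f,\uppi)$ is affine, if $\uppi\notin\cM_\Ag$ then there exists $f_0\in\Cc^2(\overline{D})$ with $\int\Ag f_0\,\D\uppi\neq 0$, and evaluating at $f=cf_0$ with $c\in\RR$ of appropriate sign and $\abs{c}\to\infty$ gives $\inf_f F(f,\uppi)=-\infty$. If instead $\uppi\in\cM_\Ag$, then $F(f,\uppi)=-\int L\,\D\uppi$ is independent of $f$. Hence
$$\sup_{\uppi\in\cP(D\times\Act\times\Rd)}\,\inf_{f\in\Cc^2(\overline{D})} F(f,\uppi)\,=\,-\inf_{\uppi\in\cM_\Ag}\int L\,\D\uppi,$$
which is the second equality of \cref{EL4.2A}. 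Combining with weak duality and \cref{EvarD} yields
$$-\inf_{\uppi\in\cM_\Ag}\int L\,\D\uppi\,\le\,\inf_{f}\sup_{\uppi} F(f,\uppi)\,=\,-\lamstr,$$
that is, $\inf_{\uppi\in\cM_\Ag}\int L\,\D\uppi\ge\lamstr$.

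For the reverse inequality I would exhibit an explicit minimizer. Fix $v\in\Usms$; by simplicity of the principal eigenpair for $\Lg_v$ we have $\lambda_v=\lamstr$ and $\psi_v=\psi^*$ up to an additive constant, so in particular $\nabla\psi_v=\nabla\psi^*$. Set
$$\uppi^*(\D{x},\D{u},\D{w})\,\df\,\Tilde\mu_v(\D{x})\,\delta_{v(x)}(\D{u})\,\delta_{\nabla\psi^*(x)}(\D{w}).$$
Then for each $f\in\Cc^2(\overline{D})$,
$$\int\Ag f\,\D\uppi^*\,=\,\int\bigl[\Lg_v f(x)+\langle a(x)\nabla\psi^*(x),\nabla f(x)\rangle\bigr]\,\Tilde\mu_v(\D{x})\,=\,\int\widetilde\Lg_v f\,\D\Tilde\mu_v\,=\,0,$$
where the last equality is the invariance property of $\Tilde\mu_v$ already recorded in the paragraph preceding \cref{Enufin}; hence $\uppi^*\in\cM_\Ag$. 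Moreover, by \cref{ET3.1B},
$$\int L\,\D\uppi^*\,=\,\frac{1}{2}\int_D\babs{\upsigma\transp(x)\nabla\psi^*(x)}^2\,\Tilde\mu_v(\D{x})\,=\,\lambda_v\,=\,\lamstr,$$
so $\uppi^*$ attains the infimum and $\inf_{\uppi\in\cM_\Ag}\int L\,\D\uppi=\lamstr$, which together with the first paragraph yields both equalities in \cref{EL4.2A}.

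The main delicate point is the invariance identity $\int\widetilde\Lg_v f\,\D\Tilde\mu_v=0$ on $\Cc^2(\overline{D})$, because the $Q$-process drift $\widetilde m_v=m_v+a\nabla\psi_v$ blows up at $\partial D$. However, since $\nabla f$ is bounded on $\overline{D}$ and $\upsigma\transp\nabla\psi_v\in \Lp^2(\Tilde\mu_v)$ by \cref{ET3.1B}, Cauchy--Schwarz yields integrability, and the identity then follows exactly as in the paper's earlier argument placing $\Tilde\mu_v\otimes\delta_{\nabla\psi_v}$ inside $\cH_v$.
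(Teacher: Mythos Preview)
Your proof is correct and follows essentially the same route as the paper's: reduce the inner infimum via affinity in $f$ (so only $\uppi\in\cM_\Ag$ survive), use weak duality against \cref{EvarD} for one inequality, and then exhibit the minimizer $\uppi^*=\Tilde\mu_{v^*}\otimes v^*\otimes\delta_{\grad\psi^*}$ using \cref{T3.1} and \cref{ET3.1B}. Your extra paragraph justifying $\int\widetilde\Lg_v f\,\D\Tilde\mu_v=0$ via $\upsigma\transp\nabla\psi_v\in\Lp^2(\Tilde\mu_v)$ and Cauchy--Schwarz is a nice explicit touch that the paper leaves implicit.
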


\begin{proof}
Let
\begin{equation}\label{PL4.2A}
\rho \,\df\, \adjustlimits\sup_{\uppi\in\cP(D\times\Act\times\Rd)\;}
\inf_{f\in\Cc^2(\overline{D})}\; F(f,\uppi)\,.
\end{equation}
It follows by \cref{EvarD,PL4.2A} that $\rho\le - \lamstr$.
It is also clear that if $\uppi\notin\cM_\Ag$ then
$\inf_{f\in\Cc^2(\overline{D})}\, F(f,\uppi)=-\infty$, so we assume
that $\uppi\in\cM_\Ag$.
Since the second and third terms in \cref{EL4.2A} are equal
when $\uppi\in\cM_\Ag$, this also shows that
\begin{equation}\label{PL4.2B}
\inf_{\uppi\in\cM_\Ag}\,
\int_{D\times\Act\times\Rd} L(x,w)\,\uppi(\D{x},\D{u},\D{w})\,\ge\,
\lamstr\,.
\end{equation}
It remains to show equality in \cref{PL4.2B}.
Recall that $\Usms$ denotes the set of measurable selectors from the minimizer
in \cref{E-Dir*}.
For $v^*\in\Usms$, the $Q$-process is ergodic by
\cref{T3.1}, and thus its invariant measure $\Tilde\mu_{v^*}$
satisfies
\begin{equation*}
\int_{D} \widetilde\Lg_{v^*} f(x)\,\Tilde\mu_{v^*}(\D{x})\,=\,0
\qquad\forall f\in\Cc^2(\overline{D})\,.
\end{equation*}
It is also clear that the measure $\uppi^*$, defined by
\begin{equation}\label{E-pi*}
\uppi^*(\D{x},\D{u},\D{w})
\,\df\,\Tilde\mu_{v^*}(\D{x})\,v^*(\D{u}\,|\,x)\,\delta_{\grad\psi^*(x)}(\D{w})\,,
\end{equation}
satisfies
\begin{equation*}
\int_{D\times\Act\times\Rd}
\Ag f(x,u,w)\,\uppi^*(\D{x},\D{u},\D{w})\,=\,
\int_{D} \widetilde\Lg_{v^*} f(x)\,\Tilde\mu_{v^*}(\D{x})\,=\,0
\qquad\forall f\in\Cc^2(\overline{D})\,,
\end{equation*}
which implies that $\uppi^*\in\cM_\Ag$.
On the other hand
\begin{equation*}
\lamstr\,=\,\int_{D} L\bigl(x,\grad\psi^*(x)\bigr)\,
\,\Tilde\mu_{v^*}(\D{x})\,=\,
\int_{D\times\Act\times\Rd}
L(x,w)\,\uppi^*(\D{x},\D{u},\D{w})
\end{equation*}
by \cref{ET3.1B}\,,
which shows that the infimum $\inf_{\uppi\in\cM_\Ag}\int L\,\D\uppi$
is attained at $\uppi^*$.
This completes the proof.
\end{proof}

We gather the results of \cref{EvarD,L4.2} in the following theorem,
which also characterizes the measures $\uppi\in\cM_\Ag$ which attain
the infimum in \cref{EL4.2A}.

\begin{theorem}\label{T4.1}
We have
\begin{equation*}
-\lamstr \,=\, \adjustlimits\inf_{f \in \Cc^2_+(\overline{D})\,}
\sup_{\uppi\in\cP(D\times\Act\times\Rd)}\,F(f,\uppi)
\,=\,
\adjustlimits\sup_{\uppi\in\cP(D\times\Act\times\Rd)\,}
\inf_{f \in \Cc^2_+(\overline{D})}\,F(f,\uppi)\,,
\end{equation*}
\and
\begin{equation}\label{ET4.1A}
\lamstr \,=\, \min_{\uppi\in\cM_\Ag}\,
\frac{1}{2}\,\int_{D\times\Act\times\Rd} \abs{\upsigma\transp(x)w}^2\,
\uppi(\D{x},\D{u},\D{w})\,.
\end{equation}
In addition, any $\uppi\in\cM_\Ag$ which attains the
minimum in \cref{ET4.1A}
has the form in \cref{E-pi*} for some $v^*\in\Usms$.
\end{theorem}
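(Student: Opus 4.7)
The first two displayed equalities combine the identity \cref{EvarD} (which extends unchanged from $\Cc^2(\overline{D})$ to $\Cc^2_+(\overline{D})$ via the substitution $h = \E^f$) with the sup-inf and primal formulas of \cref{L4.2}. The display \cref{ET4.1A} is just the second equality in \cref{EL4.2A} rewritten using $L(x,w) = \tfrac{1}{2}\abs{\upsigma\transp(x)w}^2$; moreover the infimum is attained by the measure $\uppi^*$ defined in \cref{E-pi*}, as already exhibited in the proof of \cref{L4.2}, so ``inf'' upgrades to ``min''. The novel content is thus the characterization of the minimizers.

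Fix any optimizing $\uppi \in \cM_\Ag$, so that $\int L\,\D\uppi = \lamstr$. Applying \cref{EvarA} with $f = \psi^*$ and invoking \cref{E-eigenB} yields the pointwise identity
\begin{equation*}
\Ag \psi^*(x,u,w) + \lamstr \,=\, G(x,u) + L(x,w) - \tfrac{1}{2}\babs{\upsigma\transp(x)\bigl(w - \grad\psi^*(x)\bigr)}^2,
\end{equation*}
where $G(x,u) \df \Lg\psi^*(x,u) + \tfrac{1}{2}\abs{\upsigma\transp(x)\grad\psi^*(x)}^2 + \lamstr \le 0$, and $G(x,u) = 0$ iff $u$ attains the maximum in \cref{E-eigenB}, i.e., corresponds to some $v^* \in \Usms$ evaluated at $x$. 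Since $\psi^* \to -\infty$ at $\partial D$, the composition $\chi_t(\psi^*)$ lies in $\Cc^2(\overline{D})$, so $\int \Ag \chi_t(\psi^*)\,\D\uppi = 0$ by the $\cM_\Ag$ constraint. Expanding $\Ag \chi_t(\psi^*) = \chi'_t(\psi^*)\Ag\psi^* + \tfrac{1}{2}\chi''_t(\psi^*)\abs{\upsigma\transp\grad\psi^*}^2$, inserting the identity above, and passing to the limit $t\to\infty$ exactly as in the proof of \cref{L4.1}, we obtain
\begin{equation*}
\int_{D\times\Act\times\Rd} G(x,u)\,\uppi(\D{x},\D{u},\D{w}) \,=\, \tfrac{1}{2}\int_{D\times\Act\times\Rd} \babs{\upsigma\transp(x)\bigl(w-\grad\psi^*(x)\bigr)}^2\,\uppi(\D{x},\D{u},\D{w}),
\end{equation*}
after canceling $\int L\,\D\uppi - \lamstr = 0$. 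Since the left side is nonpositive and the right side nonnegative, both must vanish, forcing $w = \grad\psi^*(x)$ and $G(x,u) = 0$ $\uppi$-almost everywhere.

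The first a.e.\ identity pins the $w$-conditional of $\uppi$ to $\delta_{\grad\psi^*(x)}$, while the second places the $u$-conditional within the optimal action set; since $\Act = \cP(\Act_0)$, this conditional is realized by a measurable selector $v^* \in \Usms$. Letting $\mu$ denote the $x$-marginal, the defining relation $\int \Ag f\,\D\uppi = 0$ reads $\int \widetilde\Lg_{v^*}f\,\D\mu = 0$ for every $f \in \Cc^2(\overline{D})$, and the geometric ergodicity of the $Q$-process from \cref{T3.1} identifies $\mu$ with $\Tilde\mu_{v^*}$. This recovers the structural form \cref{E-pi*}. The main obstacle is justifying the passage $t\to\infty$: unlike in \cref{L4.1}, the $\cPsv$-type integrability is not assumed but must be extracted from $\int\Ag\chi_t(\psi^*)\,\D\uppi = 0$ itself, which supplies a uniform-in-$t$ bound on $\int \chi''_t(\psi^*)\abs{\upsigma\transp\grad\psi^*}^2\,\D\uppi$; monotone convergence, combined with the finiteness of $\int L\,\D\uppi$, then delivers the integrability needed to take limits.
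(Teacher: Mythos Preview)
Your argument is correct and the characterization of minimizers takes a genuinely different route from the paper's. The paper avoids cutoffs: it approximates $D$ from outside by smooth domains $D_n\Supset D$ and tests against the log-eigenfunctions $\phi_n=\log\Phi_n$ of $\cG^*$ on $D_n$, which lie in $\Cc^2(\overline{D})$ because $\Phi_n>0$ on $\overline{D}$. Integrating $\Ag\phi_n$ against the minimizer yields an exact identity for each $n$, and one passes to the limit via Fatou's lemma using the one-sided bound $\Lg_{\Hat v}\phi_n+\tfrac{1}{2}\abs{\upsigma\transp\grad\phi_n}^2\le-\lambda_n^*$ and $\Cc^2$ convergence on compacta. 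Your route instead keeps $\psi^*$ fixed and uses $\chi_t(\psi^*)\in\Cc^2(\overline{D})$ as test functions, mirroring \cref{L4.1}; the integrability you flag is indeed recoverable from the identity itself (the nonnegative terms are bounded by $\int L\,\D\uppi=\lamstr$, so monotone convergence gives $\int\tfrac{1}{2}\abs{\upsigma\transp(w-\grad\psi^*)}^2\,\D\uppi<\infty$, hence $\int\abs{\upsigma\transp\grad\psi^*}^2\,\D\uppi<\infty$, and the $\chi''_t$ term vanishes). Each approach carries its own overhead: the paper relies on continuity and monotonicity of principal eigenvalues in the domain, you on the cutoff bookkeeping.

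One small omission: concluding that the $x$-marginal $\mu$ coincides with $\Tilde\mu_{v^*}$ from $\int\widetilde\Lg_{v^*}f\,\D\mu=0$ for $f\in\Cc^2(\overline{D})$ requires Echeverr\'\i a's theorem (as the paper invokes) to pass from infinitesimal invariance to genuine invariance; geometric ergodicity from \cref{T3.1} only supplies the uniqueness that finishes the identification.
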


\begin{proof}
We need to prove the assertion in the second part of the theorem.
Let $D_n\Supset D$, $n\in\NN$, be a decreasing sequence of bounded $C^{2,1}$ domains
such that $\cap_{n\in\NN} D_n = \overline{D}$.
We denote the principal eigenpair of $\cG^*$ on $D_n$ by
$(\lambda^*_n,\Phi_n)\in\RR\times\Cc^2(\overline{D}_n)$, and set $\phi_n=\log\Phi_n$.
Suppose $\widehat\uppi\in\cM_\Ag$ attains the minimum in \cref{ET4.1A}.
We disintegrate it as
\begin{equation*}
\widehat\uppi(\D{x},\D{u},\D{w}) \,=\,\Hat\mu(\D{x})\,
\Hat{v}(\D{u}\,|\,x)\, \widehat{w}(\D{w}\,|\, u,x)\,.
\end{equation*}
Using \cref{EvarA}, with $f=\phi_n\in\Cc^2_+(\overline{D})$, and integrating
with respect to $\widehat\uppi$ we obtain
\begin{equation}\label{PT4.1A}
\begin{aligned}
-\lamstr&\,=\, - \int_{D\times\Act\times\Rd} L(x,w)\,
\widehat\uppi(\D{x},\D{u},\D{w})\\
&\,=\, \int_{D}
\biggl(\Lg_{\Hat{v}} \phi_n(x) +
\frac{1}{2}\babs{\upsigma\transp(x)\grad \phi_n(x)}^2\biggr)\,\Hat\mu(\D{x})\\
&\mspace{50mu}-
\int_{D\times\Act\times\Rd}
\frac{1}{2}\babs{\upsigma\transp(x)\bigl(w-\grad \phi_n(x)\bigr)}^2\,
\widehat{w}(\D{w}\,|\, u,x)\,\Hat\eta(\D{x},\D{u})\,,
\end{aligned}
\end{equation}
with $\Hat\eta(\D{x},\D{u})=\Hat\mu(\D{x})\,\Hat{v}(\D{u}\,|\,x)$.
Now
\begin{equation}\label{PT4.1B}
\Lg_{\Hat{v}} \phi_n(x) +
\frac{1}{2}\babs{\upsigma\transp(x)\grad \phi_n(x)}^2 \,\le\, -\lambda^*_n\,,
\end{equation}
and we know that $\lambda^*_n\nearrow \lamstr$ as $n\to\infty$.
By \cref{PT4.1A}, we must have
\begin{equation}\label{PT4.1C}
\limsup_{n\to\infty}\,\int_{D}
\biggl(\Lg_{\Hat{v}} \phi_n(x) +
\frac{1}{2}\babs{\upsigma\transp(x)\grad \phi_n(x)}^2\biggr)\,\Hat\mu(\D{x})
\,\ge\, -\lamstr\,.
\end{equation}
By elliptic regularity, $\phi_n$
converges to $\psi^*$ in $\Cc^{2,\alpha}(K)$ for any
compact $K\subset D$ and any $\alpha\in(0,1)$.
Thus $\Lg_{\Hat{v}} \phi_n\to \Lg_{\Hat{v}} \psi^*$ pointwise
in $D$,
and in view of \cref{PT4.1B},
we can apply Fatou's lemma to \cref{PT4.1C} to obtain
\begin{equation*}
\int_{D}
\biggl(\Lg_{\Hat{v}} \psi^*(x) +
\frac{1}{2}\babs{\upsigma\transp(x)\grad \psi^*(x)}^2\biggr)\,\Hat\mu(\D{x})
\,=\, -\lamstr\,.
\end{equation*}
This shows that $\Hat{v}= v^*$ a.e.\ on the support of $\Hat\mu$
for some $v^*\in\Usms$.
Similarly, from the last term in \cref{PT4.1A}, we obtain
\begin{equation*}
\int_{D\times\Act\times\Rd}
\frac{1}{2}\babs{\upsigma\transp(x)\bigl(w-\grad \psi^*(x)\bigr)}^2\,
\widehat{w}(\D{w}\,|\, u,x)\,\Hat\eta(\D{x},\D{u})\,=\,0\,,
\end{equation*}
which shows that $\widehat{w}(\D{w}\,|\, u,x)=\delta_{\grad\psi^*(x)}(\D{w})$ a.e.\
on the support of $\Hat\eta(\D{x},\D{u})=\Hat\mu(\D{x})v^*(\D{u}\,|\,x)$.

Let
\begin{equation*}
\widetilde\uppi(\D{x},\D{u},\D{w}) \,=\, \Hat\mu(\D{x})\,
v^*(\D{u}\,|\,x)\,\delta_{\grad\psi^*(x)}(\D{w})\,.
\end{equation*}
Since $\widetilde\uppi$ agrees with $\widehat\uppi$ on the support
of $\Hat\mu$, we must have
$\int_{D\times\Act\times\Rd}\Ag f\,\D\widetilde\uppi = 0$
for all $f \in C^2(\overline{D})$,
which implies that
\begin{equation*}
\int_{D} \widetilde\Lg_{v^*} f(x)\,
\Hat\mu(\D{x})\,=\,0 \qquad \forall\,f \in C^2(\overline{D})\,.
\end{equation*}
The Theorem in \cite{Echeverria-82}
(see also \cite[Chapter~8]{Ethier-Kurtz}) then asserts that
$\Hat\mu$ is an invariant probability measure for the process controlled
by $v^*$.
The uniqueness of the invariant probability measure of the $Q$-process
then implies that $\Hat\mu=\Tilde\mu_{v^*}$, which combined with the
argument in the preceding paragraph shows that
$\Hat{v}= v^*$ and $\widehat{w}=\delta_{\grad\psi^*(x)}(\D{w})$
a.e.\ $x\in D$.
This completes the proof.
\end{proof}

One may view \cref{ET4.1A}
as the abstract linear programming formulation of the ergodic control problem
for the controlled diffusion $\process{Z}$ in $D$ whose (controlled) extended
generator is given by $\Ag$ indexed by the control variables $u\in\Act$ and
$w\in\Rd$,
with the objective of minimizing the ergodic cost
\begin{equation*}
\limsup_{T\nearrow\infty}\,\frac{1}{T}\,
\Exp\biggl[\frac{1}{2}\int_0^T\babs{\upsigma\transp(Z_t)W_t}^2\,\D{t}\biggr]
\end{equation*}
over all non-anticipative control processes $\bigl(U_t,W_t\bigr)_{t\ge0}$
such that $\Prob(\uptau_\epsilon<t)\to0$ as $\epsilon\searrow0$
for any $t>0$,
with $\uptau_\epsilon$ as in \cref{R3.2}.
The corresponding Hamilton--Jacobi--Bellman equation is
\begin{equation*}
\min_{(u,w)\in\Act\times\Rd}\, \biggl(\Ag\Phi(x,u,w)
+ \frac{1}{2}\babs{\upsigma\transp(x)w}^2\biggr)
- \beta \,=\, 0\,,
\end{equation*}
where $\Phi\in\Cc^2(D)\cap\{f : \lim_{x \to \partial D}f(x) = -\infty\}$.
Performing the minimization over $w$, we obtain
\begin{equation*}
\min_{u\in\Act}\,\Lg\Phi(x,u)
- \frac{1}{2}\babs{\upsigma\transp(x)\nabla\Phi(x)}^2 - \beta \,=\, 0\,.
\end{equation*}
Comparing with \cref{E-eigenB}, we have $\Phi = -\psi^*$ and $\beta = \lamstr$.
Here we use the well-posedness of  \cref{E-eigenB} which follows from the fact
that the invertible smooth transformation $\Psi^* = e^{\psi^*}$ converts
\cref{E-eigenB}  into  the well posed Dirichlet problem \cref{E-Dir*} with
a unique solution
in $\Cc^2(D)\cap \Cc(\bar{D})$.
Recall that $\Usms$ denotes the set of measurable  selectors from the minimizer
in \cref{E-Dir*}, which is precisely the set of optimal stationary Markov controls.
That is, $\lamstr=\lambda_{v}=\beta_v$ and
$\Psi_v=\Psi^*$ for all $v\in\Usms$.
Therefore, under any optimal choice $v^*\in\Usms$, the optimal choice of $\process{W}$
is precisely $W_t = \nabla\psi^*(Z_t)$, $t \ge 0$.
That is, the optimal process is the corresponding $Q$-process $\process{\widetilde{Z}}$
controlled by $v^*$.
This equivalence leads to the following representation for the optimal eigenvalue
(i.e., optimal exit rate) in terms of the $Q$-process.

\begin{theorem}
For any $v^*\in\Usms$, the optimal exit rate $\lamstr$ is given by
\begin{align*}
\lamstr &\,=\, \frac{1}{2}
\int_D\babs{\upsigma\transp(x)\nabla\psi^*(x)}^2\,\Tilde\mu_{v^*}(\D{x}) \\[5pt]
&\,=\, \inf_{v\in\Usm}\,\frac{1}{2}
\int_D\babs{\upsigma\transp(x)\nabla\psi_v(x)}^2\,\Tilde\mu_{v}(\D{x}) \\[5pt]
&\,=\,\frac{\int_D\frac{\abs{\upsigma\transp(x)\nabla\Psi^*(x)}^2}{\Psi^*(x)}\,
\alpha_{v^*}(\D{x})}{2\int_D\Psi^*(x)\,\alpha_{v^*}(\D{x})} \\[5pt]
&\,=\, \inf_{v \in \Usm}\,
\frac{\int_D\frac{\abs{\upsigma\transp(x)\nabla\Psi_v(x)}^2}{\Psi_v(x)}\,
\alpha_{v}(\D{x})}{2\int_D\Psi_v(x)\,\alpha_{v}(\D{x})}\,.
\end{align*}
\end{theorem}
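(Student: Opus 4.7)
The plan is to derive all four identities as consequences of results already established in the paper, with the main work being algebraic manipulation connecting $\tilde\mu_v$ to $\alpha_v$ and $\nabla\psi_v$ to $\nabla\Psi_v$.

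First, for the opening identity, I would apply \cref{ET3.1B} of \cref{T3.1} with the choice $v=v^*\in\Usms$. Since by the discussion after \cref{E-Dir*} we have $\lambda_{v^*}=\lamstr$ and $\psi_{v^*}=\psi^*$ (up to the obvious normalization), this gives $\lamstr=\tfrac12\int_D|\upsigma\transp\nabla\psi^*|^2\,\D\Tilde\mu_{v^*}$ immediately. Alternatively, this drops out of the optimal measure $\uppi^*$ identified in \cref{E-pi*} of \cref{T4.1}, since $\int L\,\D\uppi^*=\tfrac12\int|\upsigma\transp\grad\psi^*|^2\,\D\Tilde\mu_{v^*}$.

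For the second identity, I would combine \cref{ET3.1B} applied to each $v\in\Usm$ with the relation $\lamstr=\inf_{v\in\Usm}\lambda_v$ recalled after \cref{E-Dir*} (the Theorem in \cite{BB}). This gives
\begin{equation*}
\inf_{v\in\Usm}\,\tfrac12\int_D|\upsigma\transp\grad\psi_v|^2\,\D\Tilde\mu_v
\,=\,\inf_{v\in\Usm}\lambda_v\,=\,\lamstr\,,
\end{equation*}
with the infimum attained on $\Usms$ by the first identity.

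For the last two identities, the key is to rewrite $\Tilde\mu_v$ in terms of the quasi-stationary distribution $\alpha_v$ using the inversion formula \cref{ET3.3B}, which reads $\D\Tilde\mu_v(y)=\Psi_v(y)\,\D\alpha_v(y)/\int_D\Psi_v\,\D\alpha_v$. Substituting into the first identity, and using the pointwise relation $\grad\psi_v=\grad\Psi_v/\Psi_v$, hence $|\upsigma\transp\grad\psi_v|^2\Psi_v=|\upsigma\transp\grad\Psi_v|^2/\Psi_v$, yields the third identity directly. The fourth identity is then obtained exactly as the second, by performing the same substitution pointwise in $v$ and taking infimum, again attained on $\Usms$.

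The only nontrivial step in the above is justifying that the measurable selector $v^*\in\Usms$ used in the first line is well defined and that the corresponding $Q$-process is ergodic with invariant measure $\Tilde\mu_{v^*}$, so that \cref{ET3.1B} is applicable; but this is already guaranteed by \cref{T3.1} together with the nonemptiness of $\Usms$ cited after \cref{E-Dir*}. Everything else is routine substitution, so no serious obstacle is anticipated.
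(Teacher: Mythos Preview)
Your proposal is correct and matches the paper's intent: the paper states this theorem without a separate proof, presenting it as a direct consequence of \cref{ET3.1B}, the relation $\lamstr=\inf_{v\in\Usm}\lambda_v$, and the inversion formula \cref{ET3.3B}, which is exactly the route you outline. There is nothing to add.
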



\section*{Acknowledgements}
The work of Ari Arapostathis was supported in part by
the Army Research Office through grant W911NF-17-1-001,
in part by the National Science Foundation through grant DMS-1715210,
and in part by the Office of Naval Research through grant N00014-16-1-2956
and was approved for public release under DCN\# 43-6053-19.
The work of Vivek Borkar was supported by a J.\ C.\ Bose Fellowship.


\begin{bibdiv}
\begin{biblist}

\bib{VenBor-17}{article}{
      author={Anantharam, V.},
      author={Borkar, V.~S.},
       title={A variational formula for risk-sensitive reward},
        date={2017},
        ISSN={0363-0129},
     journal={SIAM J. Control Optim.},
      volume={55},
      number={2},
       pages={961\ndash 988},
      review={\MR{3629428}},
}

\bib{ABis-19b}{article}{
      author={Arapostathis, A.},
      author={Biswas, A.},
       title={Risk-sensitive control for a class of diffusions with jumps},
        date={2019},
     journal={ArXiv e-prints},
      volume={1910.05004},
      eprint={https://arxiv.org/abs/1910.05004},
}

\bib{ABis-19}{article}{
      author={Arapostathis, A.},
      author={Biswas, A.},
       title={A variational formula for risk-sensitive control of diffusions in
  $\mathbb{R}^d$},
        date={2020},
     journal={SIAM J. Control Optim.},
      volume={58},
      number={1},
       pages={85\ndash 103},
      review={\MR{4048004}},
}

\bib{ABBK-19}{article}{
      author={Arapostathis, A.},
      author={Biswas, A.},
      author={Borkar, V.~S.},
      author={Kumar, K.~Suresh},
       title={A variational characterization of the risk-sensitive average
  reward for controlled diffusions in $\mathbb{R}^d$},
        date={2019},
     journal={ArXiv e-prints},
      volume={1903.08346},
      eprint={https://arxiv.org/abs/1903.08346},
}

\bib{Survey}{collection.article}{
      author={Arapostathis, A.},
      author={Borkar, V.~S.},
      editor={Joshua, V.~C.},
      editor={Varadhan, S. R.~S.},
      editor={Vishnevsky, V.},
       title={`{C}ontrolled' versions of the {C}ollatz--{W}ielandt and
  {D}onsker--{V}aradhan formulae},
  note={Applied Probability and Stochastic Processes, Springer Nature,
  Singapore, 2020, to appear, available at \url{https://arxiv.org/abs/1903.10714}},
}

\bib{AHP18}{article}{
      author={Arapostathis, A.},
      author={Hmedi, H.},
      author={Pang, G.},
       title={On uniform exponential ergodicity of {M}arkovian multiclass
  many-server queues in the {H}alfin--{W}hitt regime},
        date={2020},
     journal={Math. Oper. Res.},
      eprint={https://arxiv.org/abs/1812.03528},
      status={to appear},
}

\bib{ABis-18}{article}{
      author={Arapostathis, Ari},
      author={Biswas, Anup},
       title={Infinite horizon risk-sensitive control of diffusions without any
  blanket stability assumptions},
        date={2018},
        ISSN={0304-4149},
     journal={Stochastic Process. Appl.},
      volume={128},
      number={5},
       pages={1485\ndash 1524},
         url={https://doi.org/10.1016/j.spa.2017.08.001},
      review={\MR{3780687}},
}

\bib{ABS-19}{article}{
      author={Arapostathis, Ari},
      author={Biswas, Anup},
      author={Saha, Subhamay},
       title={Strict monotonicity of principal eigenvalues of elliptic
  operators in {$\mathbb{R}^d$} and risk-sensitive control},
        date={2019},
     journal={J. Math. Pures Appl. (9)},
      volume={124},
       pages={169\ndash 219},
      review={\MR{3926044}},
}

\bib{ABG}{book}{
      author={Arapostathis, Ari},
      author={Borkar, Vivek~S.},
      author={Ghosh, Mrinal~K.},
       title={Ergodic control of diffusion processes},
      series={Encyclopedia of Mathematics and its Applications},
   publisher={Cambridge University Press, Cambridge},
        date={2012},
      volume={143},
        ISBN={978-0-521-76840-5},
      review={\MR{2884272}},
}

\bib{ABK}{article}{
      author={Arapostathis, Ari},
      author={Borkar, Vivek~S.},
      author={Kumar, K.~Suresh},
       title={Risk-sensitive control and an abstract {C}ollatz-{W}ielandt
  formula},
        date={2016},
     journal={J. Theoret. Probab.},
      volume={29},
      number={4},
       pages={1458\ndash 1484},
      review={\MR{3571250}},
}

\bib{Befekadu-15}{article}{
      author={Befekadu, Getachew~K.},
      author={Antsaklis, Panos~J.},
       title={On the asymptotic estimates for exit probabilities and minimum
  exit rates of diffusion processes pertaining to a chain of distributed
  control systems},
        date={2015},
        ISSN={0363-0129},
     journal={SIAM J. Control Optim.},
      volume={53},
      number={4},
       pages={2297\ndash 2318},
      review={\MR{3376771}},
}

\bib{Benes-70}{article}{
      author={Bene\v{s}, V.~E.},
       title={Existence of optimal strategies based on specified information,
  for a class of stochastic decision problems},
        date={1970},
        ISSN={0363-0129},
     journal={SIAM J. Control},
      volume={8},
       pages={179\ndash 188},
      review={\MR{0265043}},
}

\bib{BNV}{article}{
      author={Berestycki, H.},
      author={Nirenberg, L.},
      author={Varadhan, S. R.~S.},
       title={The principal eigenvalue and maximum principle for second-order
  elliptic operators in general domains},
        date={1994},
        ISSN={0010-3640},
     journal={Comm. Pure Appl. Math.},
      volume={47},
      number={1},
       pages={47\ndash 92},
      review={\MR{1258192}},
}

\bib{BB}{article}{
      author={Biswas, Anup},
      author={Borkar, Vivek~S.},
       title={On a controlled eigenvalue problem},
        date={2010},
        ISSN={0167-6911},
     journal={Systems Control Lett.},
      volume={59},
      number={11},
       pages={734\ndash 735},
      review={\MR{2767905}},
}

\bib{CV}{article}{
      author={Champagnat, Nicolas},
      author={Villemonais, Denis},
       title={Exponential convergence to quasi-stationary distribution and
  {$Q$}-process},
        date={2016},
        ISSN={0178-8051},
     journal={Probab. Theory Related Fields},
      volume={164},
      number={1-2},
       pages={243\ndash 283},
      review={\MR{3449390}},
}

\bib{Champagnat-17}{article}{
      author={Champagnat, Nicolas},
      author={Villemonais, Denis},
       title={Uniform convergence to the {$Q$}-process},
        date={2017},
        ISSN={1083-589X},
     journal={Electron. Commun. Probab.},
      volume={22},
       pages={Paper No. 33, 7},
      review={\MR{3663104}},
}

\bib{Collatz}{article}{
      author={Collatz, L.},
       title={Einschliessungssatz f\"{u}r die charakteristischen {Z}ahlen von
  {M}atrizen},
        date={1942},
        ISSN={0025-5874},
     journal={Math. Z.},
      volume={48},
       pages={221\ndash 226},
      review={\MR{8590}},
}

\bib{Collet}{book}{
      author={Collet, Pierre},
      author={Mart\'{\i}nez, Servet},
      author={San~Mart\'{\i}n, Jaime},
       title={Quasi-stationary distributions: Markov chains, diffusions and
  dynamical systems},
      series={Probability and its Applications},
   publisher={Springer, Heidelberg},
        date={2013},
        ISBN={978-3-642-33130-5; 978-3-642-33131-2},
      review={\MR{2986807}},
}

\bib{DZ}{book}{
      author={Dembo, Amir},
      author={Zeitouni, Ofer},
       title={Large deviations techniques and applications},
      series={Stochastic Modelling and Applied Probability},
   publisher={Springer-Verlag, Berlin},
        date={2010},
      volume={38},
        ISBN={978-3-642-03310-0},
        note={Corrected reprint of the second (1998) edition},
      review={\MR{2571413}},
}

\bib{DonVar-75}{article}{
      author={Donsker, Monroe~D.},
      author={Varadhan, S. R.~S.},
       title={On a variational formula for the principal eigenvalue for
  operators with maximum principle},
        date={1975},
        ISSN={0027-8424},
     journal={Proc. Nat. Acad. Sci. U.S.A.},
      volume={72},
       pages={780\ndash 783},
      review={\MR{361998}},
}

\bib{Down-Meyn-Tweedie-1995}{article}{
      author={Down, D.},
      author={Meyn, S.~P.},
      author={Tweedie, R.~L.},
       title={Exponential and uniform ergodicity of {M}arkov processes},
        date={1995},
     journal={Ann. Probab.},
      volume={23},
      number={4},
       pages={1671\ndash 1691},
      review={\MR{1379163}},
}

\bib{Echeverria-82}{article}{
      author={Echeverr\'{\i}a, Pedro},
       title={A criterion for invariant measures of {M}arkov processes},
        date={1982},
     journal={Z. Wahrsch. Verw. Gebiete},
      volume={61},
      number={1},
       pages={1\ndash 16},
      review={\MR{671239}},
}

\bib{Ethier-Kurtz}{book}{
      author={Ethier, Stewart~N.},
      author={Kurtz, Thomas~G.},
       title={Markov processes. characterization and convergence},
      series={Wiley Series in Probability and Mathematical Statistics},
   publisher={John Wiley \& Sons, Inc., New York},
        date={1986},
        ISBN={0-471-08186-8},
      review={\MR{838085}},
}

\bib{Fleming-77}{article}{
      author={Fleming, Wendell~H.},
       title={Exit probabilities and optimal stochastic control},
        date={1977/78},
        ISSN={0095-4616},
     journal={Appl. Math. Optim.},
      volume={4},
      number={4},
       pages={329\ndash 346},
      review={\MR{512217}},
}

\bib{FlemJam-92}{article}{
      author={Fleming, W.~H.},
      author={James, M.~R.},
       title={Asymptotic series and exit time probabilities},
        date={1992},
        ISSN={0091-1798},
     journal={Ann. Probab.},
      volume={20},
      number={3},
       pages={1369\ndash 1384},
      review={\MR{1175266}},
}

\bib{Fleming-85}{collection.article}{
      author={Fleming, Wendell~H.},
      author={Souganidis, Panagiotis~E.},
       title={A {PDE} approach to asymptotic estimates for optimal exit
  probabilities},
   booktitle={Stochastic differential systems ({M}arseille-{L}uminy, 1984)},
      editor={Metivier, M.},
      editor={Pardoux, E.},
      note={Lect. Notes Control Inf. Sci., 69, Springer, Berlin, 1985.},
       pages={281\ndash 285},
      review={\MR{798331}},
}

\bib{Fleming-81}{article}{
      author={Fleming, Wendell~H.},
      author={Tsai, Chun~Ping},
       title={Optimal exit probabilities and differential games},
        date={1981},
        ISSN={0095-4616},
     journal={Appl. Math. Optim.},
      volume={7},
      number={3},
       pages={253\ndash 282},
      review={\MR{635802}},
}

\bib{Gong-88}{article}{
      author={Gong, Guang~Lu},
      author={Qian, Min~Ping},
      author={Zhao, Zhong~Xin},
       title={Killed diffusions and their conditioning},
        date={1988},
        ISSN={0178-8051},
     journal={Probab. Theory Related Fields},
      volume={80},
      number={1},
       pages={151\ndash 167},
      review={\MR{970476}},
}

\bib{Gyongy-96}{article}{
      author={Gy\"{o}ngy, Istv\'{a}n},
      author={Krylov, Nicolai},
       title={Existence of strong solutions for {I}t\^{o}'s stochastic
  equations via approximations},
        date={1996},
        ISSN={0178-8051},
     journal={Probab. Theory Related Fields},
      volume={105},
      number={2},
       pages={143\ndash 158},
      review={\MR{1392450}},
}

\bib{Ichihara-12}{article}{
      author={Ichihara, Naoyuki},
       title={Large time asymptotic problems for optimal stochastic control
  with superlinear cost},
        date={2012},
        ISSN={0304-4149},
     journal={Stochastic Process. Appl.},
      volume={122},
      number={4},
       pages={1248\ndash 1275},
      review={\MR{2914752}},
}

\bib{Knobloch-10}{article}{
      author={Knobloch, Robert},
      author={Partzsch, Lothar},
       title={Uniform conditional ergodicity and intrinsic ultracontractivity},
        date={2010},
        ISSN={0926-2601},
     journal={Potential Anal.},
      volume={33},
      number={2},
       pages={107\ndash 136},
      review={\MR{2658978}},
}

\bib{Meleard-12}{article}{
      author={M\'{e}l\'{e}ard, Sylvie},
      author={Villemonais, Denis},
       title={Quasi-stationary distributions and population processes},
        date={2012},
        ISSN={1549-5787},
     journal={Probab. Surv.},
      volume={9},
       pages={340\ndash 410},
      review={\MR{2994898}},
}

\bib{Pinsky-85}{article}{
      author={Pinsky, Ross~G.},
       title={On the convergence of diffusion processes conditioned to remain
  in a bounded region for large time to limiting positive recurrent diffusion
  processes},
        date={1985},
        ISSN={0091-1798},
     journal={Ann. Probab.},
      volume={13},
      number={2},
       pages={363\ndash 378},
      review={\MR{781410}},
}

\bib{Wielandt}{article}{
      author={Wielandt, Helmut},
       title={Unzerlegbare, nicht negative {M}atrizen},
        date={1950},
        ISSN={0025-5874},
     journal={Math. Z.},
      volume={52},
       pages={642\ndash 648},
      review={\MR{35265}},
}

\end{biblist}
\end{bibdiv}

\end{document}